\newtheorem{theorem}{Theorem}
\theoremstyle{plain}
\newtheorem{acknowledgement}{Acknowledgements}
\newtheorem{lemma}{Lemma}
\newtheorem{proposition}{Proposition}
\newtheorem{remark}{Remark}
\numberwithin{equation}{section}
\begin{document}
\title[Singular quasilinear systems ]{Singular quasilinear elliptic systems
involving gradient terms}
\subjclass[2010]{35J75; 35J48; 35J92}
\keywords{Singular system; $p$-Laplacian; Sub-supersolution; Regularity,
Fixed point.}

\begin{abstract}
In this paper we establish existence of smooth positive solutions for a
singular quasilinear elliptic system involving gradient terms. The approach
combines sub-supersolutions method and Schauder's fixed point theorem.
\end{abstract}

\author{Pasquale Candito}
\address{Pasquale Candito\\
DIMET, Universit\`{a} degli Studi di Reggio Calabria, 89100 Reggio Calabria,
Italy}
\email{pasquale.candito@unirc.it}
\author{Roberto Livrea}
\address{Roberto Livrea\\
Department of Mathematics and Informatics, University of Palermo, Via
Archirafi, Palermo, Italy}
\email{roberto.livrea@unipa.it}
\author{Abdelkrim Moussaoui}
\address{Abdelkrim Moussaoui\\
Biology Department, A. Mira Bejaia University, Targa Ouzemour, 06000 Bejaia,
Algeria.}
\email{abdelkrim.moussaoui@univ-bejaia.dz}
\maketitle

\section{Introduction}

Let $\Omega \subset 
%TCIMACRO{\U{211d} }%
%BeginExpansion
\mathbb{R}
%EndExpansion
^{N}$ $\left( N\geq 2\right) $ be a bounded domain with smooth boundary $%
\partial \Omega $. We deal with the following quasilinear elliptic system%
\begin{equation}
\left\{ 
\begin{array}{l}
-\Delta _{p}u=f(x,u,v,\nabla u,\nabla v)\text{ in }\Omega \\ 
-\Delta _{q}v=g(x,u,v,\nabla u,\nabla v)\text{ in }\Omega \\ 
u,v>0\text{ \ \ \ in }\Omega \\ 
u,v=0\text{ \ \ \ on }\partial \Omega%
\end{array}%
\right.  \tag{$P$}  \label{p}
\end{equation}
where $\Delta _{p}$ (resp. $\Delta _{q})$ stands for the $p$-Laplacian
(resp. $q$-Laplacian) differential operator on $W_{0}^{1,p}(\Omega )$ (resp. 
$W_{0}^{1,q}(\Omega )$) with $1<p,q\leq N$. The nonlinearity terms $%
f(x,u,v,\nabla u,\nabla v)$ and $g(x,u,v,\nabla u,\nabla v),$ which is often
expressed as dealing with convection terms, can exhibit singularities when
the variables $u$ and $v$ approach zero. Specifically, we assume that $%
f,g:\Omega\times(0,+\infty )\times (0,+\infty )\times 
%TCIMACRO{\U{211d} }%
%BeginExpansion
\mathbb{R}
%EndExpansion
^{2N}\rightarrow (0,+\infty )$ are Carath\'{e}odory functions, that is, $%
f(\cdot ,s_{1},s_{2},\xi _{1},\xi _{2})$ and $g(\cdot ,s_{1},s_{2},\xi
_{1},\xi _{2})$ are measurable for every $(s_{1},s_{2},\xi _{1},\xi _{2})\in
(0,+\infty )\times (0,+\infty )\times \mathbb{R}^{2N}$ and $f(x,\cdot ,\cdot
,\cdot ,\cdot )$ and $g(x,\cdot ,\cdot ,\cdot ,\cdot )$ are continuous
functions for a.e. $x\in \Omega ,$ and are subjected to the hypotheses:

\begin{description}
\item[$\mathrm{H}(f)$] There exist constants $M_{1},m_{1}>0$ and $-1<\alpha
_{1}<0<\beta _{1},\gamma _{1},\theta _{1}$ such that%
\begin{equation*}
\begin{array}{c}
m_{1}s_{1}^{\alpha _{1}}s_{2}^{\beta _{1}}\leq f(x,s_{1},s_{2},\xi _{1},\xi
_{2})\leq M_{1}s_{1}^{\alpha _{1}}s_{2}^{\beta _{1}}+|\xi _{1}|^{\gamma
_{1}}+|\xi _{2}|^{\theta _{1}}\ 
\end{array}%
\end{equation*}
for a.e. $x\in \Omega$, for all $s_{1},s_{2}>0$, for all $\xi_1,\xi_2\in%
\mathbb{R}^N$, with%
\begin{equation*}
0\leq \alpha _{1}+\beta _{1}<-\alpha_1+\beta_1<p-1\text{ \ and \ }\max
\{\gamma _{1},\theta _{1}\}<p-1.
\end{equation*}

\item[$\mathrm{H}(g)$] There exist constants $M_{2},m_{2}>0$ and $-1<\beta
_{2}<0<\alpha _{2},\gamma _{2},\theta _{2}$ such that%
\begin{equation*}
\begin{array}{c}
m_{2}s_{1}^{\alpha _{2}}s_{2}^{\beta _{2}}\leq g(x,s_{1},s_{2},\xi _{1},\xi
_{2})\leq M_{2}s_{1}^{\alpha _{2}}s_{2}^{\beta _{2}}+|\xi _{1}|^{\gamma
_{2}}+|\xi _{2}|^{\theta _{2}}%
\end{array}%
\end{equation*}%
for a.e. $x\in \Omega$, for all $s_{1},s_{2}>0$, for all $\xi_1,\xi_2\in%
\mathbb{R}^N$, with 
\begin{equation*}
0\leq \alpha _{2}+\beta _{2}<-\alpha_2+\beta_2<q-1\text{ \ and \ }\max
\{\gamma _{2},\theta _{2}\}<q-1.
\end{equation*}
\end{description}

\bigskip

The main interest of this work lies in the dependence of the right hand side
terms on the solution and its gradient. The presence of the latter
constitutes a serious obstacle in the study of the problem (\ref{p}).
Namely, the imposed hypotheses do not guarantee that the structure of the
system is variational. Thus, variational methods cannot be applied. Another
important aspect of problem (\ref{p}) is that the convection terms can
exhibit singularities when the variables $u$ and $v$ approach zero. This
occur under hypotheses $\mathrm{H}(f)$ and $\mathrm{H}(g)$ where exponents $%
\alpha _{1}$ and $\beta _{2}$ are allowed to be negative. This type of
problem is rare in the literature. Actually, according to our knowledge,
singular system (\ref{p}) was examined only in \cite{MMZ} where the system
is supposed to have a competitive structure. This means that the
nonlinearities $f$ and $g$ are not increasing with respect to $v$ and $u$,
respectively. Beside that, the singularities appear in both the solution and
its gradient through some specific growth conditions. These combined with
properties of the eigenfunction corresponding to the first eigenvalue of the
operators $-\Delta _{p}$ and $-\Delta _{q}$ is a key point on which the
existence results is proved. It is worth pointing out that the assumptions
imposed therein, precisely (1.2)-(1.5), are not satisfied for system (\ref{p}%
) under hypotheses $\mathrm{H}(f)$ and $\mathrm{H}(g)$. Moreover, in this
work, neither competitive nor the complementary situation called cooperative
structure on the system (\ref{p}) is imposed.

The semilinear case (i.e., $p=q=2$) for a class of singular systems with
convection terms was examined by Alves, Carriao and Faria \cite{ACF}, and by
Alves and Moussaoui \cite{AM}, by essentially using the linearity of the
principal part. For singular elliptic systems without gradient terms, we
refer to Alves and Corr\^{e}a \cite{AC}, Alves, Corr\^{e}a and Gon\c{c}alves 
\cite{ACG}, El Manouni, Perera and Shivaji \cite{EPS}, Ghergu \cite{G1,G2},
Hern\'{a}ndez, Mancebo and Vega \cite{HMV}, Montenegro and Suarez \cite{MS},
Motreanu and Moussaoui \cite{MM1, MM2, MM3}.

\bigskip

The main result of the present paper provides the existence of (positive)
smooth solutions for the singular system (\ref{p}).

\begin{theorem}
\label{T1}Assume $\mathrm{H}(f)$ and $\mathrm{H}(g)$ hold. Then problem (\ref%
{p}) admits a (positive) solution $(u,v)$ in $C_{0}^{1}(\overline{\Omega }%
)\times C_{0}^{1}(\overline{\Omega })$ satisfying%
\begin{equation}
\begin{array}{c}
\tilde c_{0}d(x)\leq u(x)\leq \tilde c_{1}d(x)\text{ \ and \ }\tilde
c_{0}^{\prime }d(x)\leq v(x)\leq \tilde c_{1}^{\prime }d(x)\text{ in }\Omega
,%
\end{array}
\label{c}
\end{equation}%
for some positive constants $\tilde c_{0},\tilde c_{0}^{\prime },\tilde
c_{1} $ and $\tilde c_{1}^{\prime }$.
\end{theorem}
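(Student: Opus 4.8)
The plan is to combine the sub-supersolution method with Schauder's fixed point theorem, the latter being needed to handle the gradient dependence in $f$ and $g$, which destroys the variational structure. First I would construct an ordered pair of sub- and supersolutions of the form $\underline u = c_0 d(x)$, $\overline u = c_1 d(x)$, $\underline v = c_0' d(x)$, $\overline v = c_1' d(x)$, where $d(x)=\operatorname{dist}(x,\partial\Omega)$ (or, more carefully, multiples of the first eigenfunction $\varphi_1$ of $-\Delta_p$ and the analogue for $-\Delta_q$, which is comparable to $d$ near $\partial\Omega$ and bounded below in the interior). The key point is that under $\mathrm{H}(f)$ the singular term $s_1^{\alpha_1}s_2^{\beta_1}$ with $c_0d(x)\le s_1,s_2$ behaves like $d(x)^{\alpha_1+\beta_1}$ from below and, thanks to $-\alpha_1+\beta_1<p-1$, like $d(x)^{-\alpha_1+\beta_1}$ near the boundary, so that $-\Delta_p(c\,d)$, which scales like $c^{p-1}$ with a boundary blow-up governed by the known behaviour of $-\Delta_p d$, can be made to dominate or be dominated by the right-hand side for suitable choices of the constants $c_0,c_1$; the conditions $\max\{\gamma_1,\theta_1\}<p-1$ and the corresponding one for $g$ ensure the gradient contributions $|\nabla u|^{\gamma_1}+|\nabla v|^{\theta_1}$, which are $O(d(x)^0)$ on the relevant truncated set, do not spoil these inequalities. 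One must verify the four differential inequalities in the weak sense together with the boundary conditions; this is the computational heart of the argument and should be stated as a separate lemma.

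Next I would freeze the convection: given $(w,z)$ in the order interval $[\underline u,\overline u]\times[\underline v,\overline v]$ with gradients in a suitable bounded set, consider the decoupled, non-singular auxiliary problems $-\Delta_p u = f(x,w,z,\nabla w,\nabla z)$, $-\Delta_q v = g(x,w,z,\nabla w,\nabla z)$ with zero boundary data, truncating $f$ and $g$ below by the supersolution and above by the subsolution if necessary so that the right-hand sides lie in $L^\infty(\Omega)$ (using that on $[\underline u,\overline u]$ one has $s_1^{\alpha_1}\le (c_0 d)^{\alpha_1}$, which is only in $L^r$ for $r<-1/\alpha_1$, so one actually gets $L^r$ data with $r$ large — enough for $C^{1,\tau}$ regularity by the Lieberman/Di Benedetto estimates). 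This defines a solution map $T:(w,z)\mapsto(u,v)$; the sub-supersolution property forces $T$ to map the order interval into itself, and the $C^{1,\tau}(\overline\Omega)$ a priori bound gives compactness of $T$ as a map on $C_0^1(\overline\Omega)\times C_0^1(\overline\Omega)$. Continuity of $T$ follows from the Carathéodory property of $f,g$ together with dominated convergence and the continuous dependence of the $p$-Laplacian solution on its $L^r$ right-hand side.

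Then Schauder's fixed point theorem applied to $T$ on the closed, convex, bounded set $\mathcal{K}=\{(w,z)\in C_0^1\times C_0^1: \underline u\le w\le\overline u,\ \underline v\le z\le\overline v\}$ yields a fixed point $(u,v)$, which is a solution of $(P)$ once we check that the truncations were not active — this is automatic since $(u,v)\in\mathcal K$ already lies between sub- and supersolution. The bounds \eqref{c} are then exactly the definition of $\mathcal K$ with $\tilde c_0=c_0$, etc. Finally, bootstrapping: knowing $u,v$ are comparable to $d(x)$, the right-hand sides are genuinely in $L^r$ for $r$ close to $-1/\alpha_1$ (resp. $-1/\beta_2$), hence $u,v\in C^{1,\tau}_0(\overline\Omega)$ by elliptic regularity for the $p$-Laplacian.

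The main obstacle I expect is twofold: first, engineering the four differential inequalities simultaneously — one needs the exponent gaps $\alpha_1+\beta_1<-\alpha_1+\beta_1<p-1$ (and its twin) precisely to reconcile the interior behaviour with the boundary blow-up of $-\Delta_p d$, and the bookkeeping of constants $c_0<c_1$, $c_0'<c_1'$ is delicate because the equation for $u$ involves $v$ through $\beta_1>0$ and vice versa, so the inequalities are coupled and the constants must be chosen in the right order. Second, ensuring that the frozen right-hand sides have enough integrability for the compactness and regularity steps despite the singularity $d(x)^{\alpha_1}$; this is where the restriction $\alpha_1>-1$ is used, guaranteeing $d^{\alpha_1}\in L^r(\Omega)$ for some $r>1$, in fact for all $r<-1/\alpha_1$, which with $N<-1/\alpha_1$ not assumed still suffices because the $p$-Laplacian maps $W^{-1,p'}$-type data to $C^{1,\tau}$ once the data is in $L^r$ with $r>N/p$ — one should check this inequality is compatible with $\alpha_1>-1$, and if not, argue via $W^{1,p}_0\cap L^\infty$ using the comparison with $d$.
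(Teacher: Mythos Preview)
Your overall strategy is sound and in fact takes a \emph{simpler} route than the paper. The essential difference is in what you freeze: you freeze all arguments of $f,g$ (both the values $(w,z)$ and the gradients), so your auxiliary problem $-\Delta_p u=f(x,w,z,\nabla w,\nabla z)$ has a completely known right-hand side and a unique solution, and the map $T$ is immediately well defined. The paper freezes only the gradients, obtaining a genuinely coupled singular system $-\Delta_p u=f(x,u,v,\nabla z_1,\nabla z_2)$; since this system may have many solutions, the authors develop machinery (Lemma~\ref{L5}, Theorem~\ref{T4}, Zorn's lemma) to single out the \emph{smallest} one in the rectangle, and then need a rather delicate approximation result (Lemma~\ref{L4}) to prove continuity of the smallest-solution map. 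Your route sidesteps all of this. The price is that your sub- and supersolution inequalities must hold uniformly over the whole rectangle --- e.g.\ for the supersolution you need $-\Delta_p\overline u\ge M_1\underline u^{\alpha_1}\overline v^{\beta_1}+\dots$ rather than the paper's $M_1\overline u^{\alpha_1}\overline v^{\beta_1}+\dots$ --- but this is precisely where the full hypothesis $-\alpha_1+\beta_1<p-1$ (not merely $\alpha_1+\beta_1<p-1$) is used, and it goes through.

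Two points need tightening. First, the set $\mathcal K$ you finally write down --- the bare order interval in $C_0^1\times C_0^1$ --- is \emph{not bounded} in the $C_0^1$-topology, so Schauder cannot be applied to it; you must retain the gradient constraints $\|\nabla w\|_\infty\le R_1$, $\|\nabla z\|_\infty\le R_2$ you alluded to earlier, and then check that $T$ preserves them (this is the content of the paper's Lemma~\ref{L3}). Second, your worry that the frozen right-hand side is only in $L^r$ with $r<-1/\alpha_1$ is unfounded: since $\alpha_1+\beta_1\ge 0$ by $\mathrm H(f)$, the product bound $w^{\alpha_1}z^{\beta_1}\le \underline u^{\alpha_1}\overline v^{\beta_1}\sim d(x)^{\alpha_1+\beta_1}$ is actually in $L^\infty(\Omega)$, so the estimate $\|\nabla u\|_\infty\le K_p\|f\|_\infty^{1/(p-1)}$ from \cite{BE} applies directly and yields the invariance of the gradient bounds for $C$ large. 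This observation also makes the regularity bootstrap at the end unnecessary.
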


The proof of Theorem \ref{T1} is chiefly based on sub-supersolution method
together with Schauder's fixed point Theorem. However, the sub-supersolution
method cannot be directly implemented. On the one hand, this is due to the
presence of singular terms in system (\ref{p}). In this respect, it should
be pointed out that, to the best of our knowledge, there is no theorem
involving gradient terms and singularities which garantees the existence of
a solution within a sub-supersolution pair.\ On the other hand, the
dependence of the right hand side terms on the gradient of the solution
complicates further the application of the above method to the problem (\ref%
{p}). Specifically, the definition of sub-supersolutions pairs for system (%
\ref{p}) (see \cite{CM}) seems to be hardly applicable because, a priori, no
conclusion can be drawn on the comparison of the gradient of two comparable
functions. To handle problem (\ref{p}), we consider an auxiliary system for
which, the sub-supersolution Theorem involving singular terms in \cite[%
Theorem 2.1]{KM} is applicable. Here, we construct the sub and supersolution
pair by choosing suitable functions with an adjustment of adequate
constants. Then, focusing on the rectangle formed by these functions, we
prove the existence of a smallest and a biggest positive solutions of the
auxiliary problem. The argument is based on the Hardy-Sobolev inequality,
Zorn's Lemma and the the $S_{+}$-property of the negative $p$-Laplacian
operator on $W_{0}^{1,p}(\Omega )$. Thereby, these allow to construct a
suitable operator whose fixed points, obtained via Schauder's fixed point
theorem, are exactly solutions of (\ref{p}).

The rest of the paper is organized as follows. Section \ref{S2} presents
auxiliary results related to sub-supersolutions and extremal solutions.
Section \ref{S3} deals with the existence of a smallest positive solution
for an auxialiary system. Section \ref{S4} contains the proof of the main
result.

\section{Preliminary results}

\label{S2}

Given $1<p<+\infty $, the spaces $L^{p}(\Omega )$ and $W_{0}^{1,p}(\Omega )$
are endowed with the usual norms $\Vert u\Vert _{p}=(\int_{\Omega }|u|^{p}\
dx)^{1/p}$ and $\Vert u\Vert _{1,p}=(\int_{\Omega }|\nabla u|^{p}\ dx)^{1/p}$%
, respectively. Denote by $p^{\prime }=\frac{p}{p-1}$ and $q^{\prime }=\frac{%
q}{q-1}$. We will also utilize the spaces $C(\overline{\Omega })$ and $%
C_{0}^{1,\beta }(\overline{\Omega })=\{u\in C^{1,\beta }(\overline{\Omega }%
):u=0\ \mbox{on
$\partial\Omega$}\}$ with $\beta \in (0,1)$.\newline
For later use, we denote by $\lambda _{1,p}$ and $\lambda _{1,q}$ the first
eigenvalue of $-\Delta _{p}$ on $W_{0}^{1,p}(\Omega )$ and of $-\Delta _{q}$
on $W_{0}^{1,q}(\Omega )$, respectively.\newline
Let $\phi _{1,p}$ be the positive eigenfunction of $-\Delta _{p}$
corresponding to $\lambda _{1,p}$, that is $-\Delta _{p}\phi _{1,p}=\lambda
_{1,p}\phi _{1,p}^{p-1}$ \ in $\Omega ,$ \ $\phi _{1,p}=0$ \ on $\partial
\Omega $. Similarly, let $\phi _{1,q}$ be the positive eigenfunction of $%
-\Delta _{q}$ corresponding to $\lambda _{1,q}$, that is $-\Delta _{q}\phi
_{1,q}=\lambda _{1,q}\phi _{1,q}^{q-1}$ \ in $\Omega ,$ $\phi _{1,q}=0$ \ on 
$\partial \Omega $. The strong maximum principle ensures the existence of
positive constants $l_{1}$, $l_{2}$, $\hat l$ and $l$ such that (see also 
\cite{GiaSchiTak}) 
\begin{equation}
l_{1}\phi _{1,p}(x)\leq \phi _{1,q}(x)\leq l_{2}\phi _{1,p}(x)\text{ for all 
}x\in \Omega  \label{10}
\end{equation}%
and%
\begin{equation}
\hat l d(x)\geq\phi _{1,p}(x),\phi _{1,q}(x)\geq ld(x)\text{ for all }x\in
\Omega \text{.}  \label{9}
\end{equation}

Here, $d(x)$ denotes the distance from a point $x\in \overline{\Omega }$ to
the boundary $\partial \Omega $, where $\overline{\Omega }=\Omega \cup
\partial \Omega $ is the closure of $\Omega \subset 
%TCIMACRO{\U{211d} }%
%BeginExpansion
\mathbb{R}
%EndExpansion
^{N}$.

Throughout the paper, if $(u_1,v_1)$, $(u_2,v_2)\in W_0^{1,p}(\Omega)\times
W_0^{1,q}(\Omega)$ are such that $u_1\leq u_2$ and $v_1\leq v_2$ a.e. in $%
\Omega$ we will write $(u_1,v_1)\leq (u_2,v_2)$ and we will use the notation 
\begin{equation*}
[u_1,u_2]\times[v_1,v_2]=\{(u,v)\in W_0^{1,p}(\Omega)\times
W_0^{1,q}(\Omega):\ u_1\leq u\leq u_2,\ v_1\leq v\leq v_2\ \text{a.e.\ in\ }%
\Omega\}.
\end{equation*}

\bigskip

A (weak) solution of (\ref{p}) is any pair $(u,v)\in W_{0}^{1,p}(\Omega
)\times W_{0}^{1,q}(\Omega )$ such that 
\begin{equation*}
\int_{\Omega }|\nabla u|^{p-2}\nabla u\nabla \varphi \,dx=\int_{\Omega
}f(x,u,v,\nabla u,\nabla v)\varphi \,dx,
\end{equation*}%
\begin{equation*}
\int_{\Omega }|\nabla v|^{q-2}\nabla v\nabla \psi \,dx=\int_{\Omega
}g(x,u,v,\nabla u,\nabla v)\psi \,dx
\end{equation*}%
for all $(\varphi ,\psi )\in W_{0}^{1,p}(\Omega )\times W_{0}^{1,q}(\Omega )$%
.

We will study auxiliary problems with not convection terms, for this reason
let us consider the following quasilinear elliptic problem%
\begin{equation}
\left\{ 
\begin{array}{l}
-\Delta _{p}u=f_{1}(x,u,v)\text{ in }\Omega \\ 
-\Delta _{q}v=f_{2}(x,u,v)\text{ in }\Omega \\ 
u,v>0\text{ in }\Omega \\ 
u,v=0\text{ on }\partial \Omega ,%
\end{array}%
\right.  \tag{$P_{(f_{1},f_{2})}$}  \label{p*}
\end{equation}%
where $f_{i}:\Omega \times (0,+\infty )\times (0,+\infty )\rightarrow 
%TCIMACRO{\U{211d} }%
%BeginExpansion
\mathbb{R}
%EndExpansion
$, $i=1,2$, are Carath\'{e}odory functions which can exhibit singularities
near zero.

Recall that $\left( \underline{u},\underline{v}\right)$, $\left( \overline{u}%
,\overline{v}\right) \in (W^{1,p}(\Omega )\cap L^{\infty }(\Omega ))\times
(W^{1,q}(\Omega )\cap L^{\infty }(\Omega ))$ form a pair of a
sub-supersolution for (\ref{p*}) if $(\underline{u},\underline v)\leq(%
\overline{u},\overline{v})$ and%
\begin{equation*}
\left\{ 
\begin{array}{l}
\int_{\Omega }\left\vert \nabla \underline{u}\right\vert ^{p-2}\nabla 
\underline{u}\nabla \varphi \ dx-\int_{\Omega }f_{1}(x,\underline{u}%
,w_{2})\varphi \ dx\leq 0 \\ 
\int_{\Omega }\left\vert \nabla \underline{v}\right\vert ^{q-2}\nabla 
\underline{v}\nabla \psi \ dx-\int_{\Omega }f_{2}(x,w_{1},\underline{v})\psi
\ dx\leq 0,%
\end{array}%
\right.
\end{equation*}%
\begin{equation*}
\left\{ 
\begin{array}{l}
\int_{\Omega }\left\vert \nabla \overline{u}\right\vert ^{p-2}\nabla 
\overline{u}\nabla \varphi \ dx-\int_{\Omega }f_{1}(x,\overline{u}%
,w_{2})\varphi \ dx\geq 0 \\ 
\int_{\Omega }\left\vert \nabla \overline{v}\right\vert ^{q-2}\nabla 
\overline{v}\nabla \psi \ dx-\int_{\Omega }f_{2}(x,w_{1},\overline{v})\psi \
dx\geq 0,%
\end{array}%
\right.
\end{equation*}%
for all $(\varphi ,\psi )\in W_{0}^{1,p}\left( \Omega \right) \times
W_{0}^{1,q}\left( \Omega \right) $ with $\varphi ,\psi \geq 0$ a.e. in $%
\Omega $ and for all $\left( w_{1},w_{2}\right) \in[\underline{u},\overline{u%
}]\times[\underline{v},\overline{v}]$.

\begin{lemma}
\label{L5} Let $(\underline{u}_{i},\underline{v}_{i}),(\overline{u}_{i},%
\overline{v}_{i})\in (W^{1,p}(\Omega )\cap L^{\infty }(\Omega ))\times
(W^{1,q}(\Omega )\cap L^{\infty }(\Omega )),$ for $i=1,2$. Put 
\begin{equation*}
\underline u=\max \{\underline{u}_{1},\underline{u}_{2}\},\quad \underline
v=\max \{\underline{v}_{1},\underline{v}_{2}\},
\end{equation*}
\begin{equation*}
\tilde u=\min \{\overline{u}_{1},\overline{u}_{2}\},\quad \tilde v=\min \{%
\overline{v}_{1},\overline{v}_{2}\}
\end{equation*}
and assume that 
\begin{equation*}
\underline u\leq \tilde u,\quad \underline v\leq \tilde v.
\end{equation*}
Moreover, suppose that 
\begin{equation*}
f_{1}(x,w_1,w_2)\in W^{-1,p^{\prime }}(\Omega ),\quad f_{2}(x,w_1,w_2)\in
W^{-1,q^{\prime }}(\Omega )
\end{equation*}
for every $(w_1,w_2)\in[\underline u,\tilde u]\times[\underline v,\tilde v] $
and $(\underline u_i,\underline v_i)$, $(\overline u_i,\overline v_i)$ ($%
i=1,2$) form two pairs of sub-supersolutions for problem (\ref{p*}).\newline
Then $(\underline u,\underline v)$, $(\tilde u,\tilde v)\in (W^{1,p}(\Omega
)\cap L^\infty(\Omega))\times (W^{1,q}(\Omega)\cap L^\infty(\Omega))$ form
also a pair of sub-supersolution for the problem (\ref{p*}).
\end{lemma}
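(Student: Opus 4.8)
The plan is to verify, one at a time, the four variational inequalities defining a sub-supersolution pair for \eqref{p*} associated with $(\underline u,\underline v)$ and $(\tilde u,\tilde v)$. The computations for the $v$-component coincide verbatim with those for the $u$-component (with $q$, $\tilde v$, $\underline v$, $f_2$, $W^{-1,q'}$ in place of $p$, $\tilde u$, $\underline u$, $f_1$, $W^{-1,p'}$), and the two supersolution inequalities are handled exactly as the subsolution ones with $\min$ replacing $\max$; hence I would write out in full only the claim that $\underline u=\max\{\underline u_1,\underline u_2\}$ satisfies the first subsolution inequality. A preliminary observation, used throughout: since $\underline u\ge\underline u_i$, $\tilde u\le\overline u_i$, $\underline v\ge\underline v_i$ and $\tilde v\le\overline v_i$, one has
\[
[\underline u,\tilde u]\times[\underline v,\tilde v]\subseteq[\underline u_i,\overline u_i]\times[\underline v_i,\overline v_i]\qquad(i=1,2),
\]
so that for each fixed $(w_1,w_2)$ in the left-hand rectangle the subsolution property of $(\underline u_i,\underline v_i)$ gives $\int_\Omega|\nabla\underline u_i|^{p-2}\nabla\underline u_i\nabla\psi\,dx\le\int_\Omega f_1(x,\underline u_i,w_2)\psi\,dx$ for all $\psi\in W_0^{1,p}(\Omega)$, $\psi\ge0$, $i=1,2$.

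Fix such a $(w_1,w_2)$ and, for the moment, a test function $\varphi\in W_0^{1,p}(\Omega)\cap L^\infty(\Omega)$ with $\varphi\ge0$; the general nonnegative $\varphi\in W_0^{1,p}(\Omega)$ is recovered at the end by replacing $\varphi$ with $\min\{\varphi,n\}$, letting $n\to+\infty$, and using continuity of both sides in $\varphi$, which is exactly where the hypotheses $f_1(x,w_1,w_2)\in W^{-1,p'}(\Omega)$ (and $f_2(x,w_1,w_2)\in W^{-1,q'}(\Omega)$ for the $v$-equation) are invoked. For $\varepsilon>0$ set $\eta_\varepsilon=\min\{1,\varepsilon^{-1}(\underline u_2-\underline u_1)^+\}\in W^{1,p}(\Omega)\cap L^\infty(\Omega)$, so that $0\le\eta_\varepsilon\le1$, $\eta_\varepsilon\to\chi_{\{\underline u_2>\underline u_1\}}$ a.e.\ as $\varepsilon\to0^+$, and $\nabla\eta_\varepsilon=\varepsilon^{-1}(\nabla\underline u_2-\nabla\underline u_1)\chi_{\{0<\underline u_2-\underline u_1<\varepsilon\}}$. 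Testing the inequality for $\underline u_1$ with $\varphi(1-\eta_\varepsilon)\ge0$ and the one for $\underline u_2$ with $\varphi\eta_\varepsilon\ge0$ (both lie in $W_0^{1,p}(\Omega)$ because $\varphi$ is bounded), adding them and expanding the products under the gradient, the terms carrying $\nabla\eta_\varepsilon$ combine into
\[
\tfrac1\varepsilon\int_{\{0<\underline u_2-\underline u_1<\varepsilon\}}\varphi\,\big(|\nabla\underline u_2|^{p-2}\nabla\underline u_2-|\nabla\underline u_1|^{p-2}\nabla\underline u_1\big)\cdot(\nabla\underline u_2-\nabla\underline u_1)\,dx\ \ge\ 0
\]
by monotonicity of $\xi\mapsto|\xi|^{p-2}\xi$ and $\varphi\ge0$; discarding this nonnegative quantity only strengthens the inequality, so
\[
\int_\Omega\big[(1-\eta_\varepsilon)|\nabla\underline u_1|^{p-2}\nabla\underline u_1+\eta_\varepsilon|\nabla\underline u_2|^{p-2}\nabla\underline u_2\big]\nabla\varphi\,dx\le\int_\Omega\big[(1-\eta_\varepsilon)f_1(x,\underline u_1,w_2)+\eta_\varepsilon f_1(x,\underline u_2,w_2)\big]\varphi\,dx.
\]

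Letting $\varepsilon\to0^+$, I use the standard a.e.\ facts $\nabla\underline u_1=\nabla\underline u_2$ on $\{\underline u_1=\underline u_2\}$, hence $\nabla\underline u=\nabla\underline u_1\chi_{\{\underline u_1\ge\underline u_2\}}+\nabla\underline u_2\chi_{\{\underline u_2>\underline u_1\}}$, together with $f_1(x,\underline u_i,w_2)=f_1(x,\underline u,w_2)$ a.e.\ on the set where $\underline u=\underline u_i$; dominated convergence (the left integrand is dominated by $(|\nabla\underline u_1|^{p-1}+|\nabla\underline u_2|^{p-1})|\nabla\varphi|\in L^1(\Omega)$ via H\"older, and the right by $(|f_1(x,\underline u_1,w_2)|+|f_1(x,\underline u_2,w_2)|)|\varphi|$ with the integrability furnished by the $W^{-1,p'}$-hypothesis) then yields
\[
\int_\Omega|\nabla\underline u|^{p-2}\nabla\underline u\nabla\varphi\,dx\ \le\ \int_\Omega f_1(x,\underline u,w_2)\varphi\,dx,
\]
which is the required inequality. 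The same scheme, with $\min$ in place of $\max$, the active piece of each minimum tested against the truncation, and the resulting crossed term now $\le0$ (hence still droppable), delivers the subsolution inequality for $\underline v$ and the supersolution inequalities for $\tilde u$ and $\tilde v$; combined with the assumed $\underline u\le\tilde u$, $\underline v\le\tilde v$, this proves that $(\underline u,\underline v)$, $(\tilde u,\tilde v)$ form a sub-supersolution pair for \eqref{p*}.

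The only genuinely delicate point I anticipate is the bookkeeping around the truncation: keeping $\varphi\eta_\varepsilon$ and $\varphi(1-\eta_\varepsilon)$ admissible as test functions (which forces the initial reduction to bounded $\varphi$ and the final density argument, the latter being where the $W^{-1,p'}$, $W^{-1,q'}$ assumptions are really used), the correct a.e.\ identification of $\nabla\max\{\underline u_1,\underline u_2\}$ and $\nabla\min\{\overline u_1,\overline u_2\}$ on the relevant level sets, and verifying that the crossed term carries the favorable sign in both the sub- and the super-solution cases.
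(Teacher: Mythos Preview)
Your proof is correct and follows essentially the same strategy as the paper: both use a smooth cutoff interpolating between the two sub-(respectively super-)solutions, add the tested inequalities, exploit the monotonicity of $\xi\mapsto|\xi|^{p-2}\xi$ to control the cross term carrying $\nabla\eta_\varepsilon$, and pass to the limit $\varepsilon\to0$. The only cosmetic differences are that the paper uses the truncation $\xi_\varepsilon(s)=\max\{-\varepsilon,\min\{s,\varepsilon\}\}$ applied to $(\underline u_1-\underline u_2)^+$ (normalizing by $\varepsilon$ only at the end) in place of your $\eta_\varepsilon$, and that the paper works with test functions $\varphi\in C_c^1(\Omega)$ and closes by density of $C_c^1(\Omega)$ in $W_0^{1,p}(\Omega)$, whereas you first restrict to bounded $\varphi\in W_0^{1,p}(\Omega)$ and then remove the restriction via $\min\{\varphi,n\}$.
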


\begin{proof}
Inspired by the proof of \cite[Lemma 3]{MMP}, for a fixed $\varepsilon >0,$
let us define the truncation function $\xi _{\varepsilon }(s)=\max
\{-\varepsilon ,\min \{s,\varepsilon \}\}$ for $s\in 
%TCIMACRO{\U{211d} }%
%BeginExpansion
\mathbb{R}
%EndExpansion
.$ It is shown in \cite{MM} that $\xi _{\varepsilon }((\overline{u}_{1}-%
\overline{u}_{2})^{-}),\xi _{\varepsilon }((\underline{u}_{1}-\underline{u}%
_{2})^{+})\in W^{1,p}(\Omega ),$%
\begin{equation*}
\nabla \xi _{\varepsilon }((\overline{u}_{1}-\overline{u}_{2})^{-})=\xi
_{\varepsilon }^{\prime }((\overline{u}_{1}-\overline{u}_{2})^{-})\nabla (%
\overline{u}_{1}-\overline{u}_{2})^{-}
\end{equation*}%
and%
\begin{equation*}
\nabla \xi _{\varepsilon }((\underline{u}_{1}-\underline{u}_{2})^{+})=\xi
_{\varepsilon }^{\prime }((\underline{u}_{1}-\underline{u}_{2})^{+})\nabla (%
\underline{u}_{1}-\underline{u}_{2})^{+}.
\end{equation*}%
For any test function $\varphi \in C_{c}^{1}(\Omega )$ with $\varphi \geq 0,$
it holds%
\begin{equation}
\left\langle -\Delta _{p}\overline{u}_{1},\xi _{\varepsilon }((\overline{u}%
_{1}-\overline{u}_{2})^{-})\varphi \right\rangle \geq \int_{\Omega }f_{1}(x,%
\overline{u}_{1},\hat{w}_{2})\xi _{\varepsilon }((\overline{u}_{1}-\overline{%
u}_{2})^{-})\varphi \text{ }dx,  \label{40}
\end{equation}%
\begin{equation}
\left\langle -\Delta _{p}\underline{u}_{1},\xi _{\varepsilon }((\underline{u}%
_{1}-\underline{u}_{2})^{+})\varphi \right\rangle \leq \int_{\Omega }f_{1}(x,%
\underline{u}_{1},\hat{w}_{2})\xi _{\varepsilon }((\underline{u}_{1}-%
\underline{u}_{2})^{+})\varphi \text{ }dx,  \label{40*}
\end{equation}%
for all $\hat{w}_{2}\in W^{1,q}(\Omega )$ with $\underline{v}_{1}\leq \hat{w}%
_{2}\leq \overline{v}_{1}$, and%
\begin{equation}
\left\langle -\Delta _{p}\overline{u}_{2},(\varepsilon -\xi _{\varepsilon }((%
\overline{u}_{1}-\overline{u}_{2})^{-}))\varphi \right\rangle \geq
\int_{\Omega }f_{1}(x,\overline{u}_{2},\check{w}_{2})\left( \varepsilon -\xi
_{\varepsilon }((\overline{u}_{1}-\overline{u}_{2})^{-})\right) \varphi 
\text{ }dx,  \label{41}
\end{equation}%
\begin{equation}
\left\langle -\Delta _{p}\underline{u}_{2},(\varepsilon -\xi _{\varepsilon
}((\underline{u}_{1}-\underline{u}_{2})^{+}))\varphi \right\rangle \leq
\int_{\Omega }f_{1}(x,\underline{u}_{2},\check{w}_{2})\left( \varepsilon
-\xi _{\varepsilon }((\underline{u}_{1}-\underline{u}_{2})^{+})\right)
\varphi \text{ }dx,  \label{41*}
\end{equation}%
for all $\check{w}_{2}\in W^{1,q}(\Omega )$ with $\underline{v}_{2}\leq 
\check{w}_{2}\leq \overline{v}_{2}$. On the other hand, using the
monotonicity of the $p$-Laplacian operator, we get 
\begin{equation}
\begin{array}{l}
\left\langle -\Delta _{p}\overline{u}_{1},\xi _{\varepsilon }((\overline{u}%
_{1}-\overline{u}_{2})^{-})\varphi \right\rangle +\left\langle -\Delta _{p}%
\overline{u}_{2},(\varepsilon -\xi _{\varepsilon }((\overline{u}_{1}-%
\overline{u}_{2})^{-}))\varphi \right\rangle \\ 
\leq \int_{\Omega }|\nabla \overline{u}_{1}|^{p-2}(\nabla \overline{u}%
_{1},\nabla \varphi )_{%
%TCIMACRO{\U{211d} }%
%BeginExpansion
\mathbb{R}
%EndExpansion
^{N}}\xi _{\varepsilon }((\overline{u}_{1}-\overline{u}_{2})^{-})\text{ }dx
\\ 
+\int_{\Omega }|\nabla \overline{u}_{2}|^{p-2}(\nabla \overline{u}%
_{2},\nabla \varphi )_{%
%TCIMACRO{\U{211d} }%
%BeginExpansion
\mathbb{R}
%EndExpansion
^{N}}\left( \varepsilon-\xi _{\varepsilon }((\overline{u}_{1}-\overline{u}%
_{2})^{-})\right) \text{ }dx%
\end{array}
\label{42}
\end{equation}%
and%
\begin{equation}
\begin{array}{l}
\left\langle -\Delta _{p}\underline{u}_{1},(\xi _{\varepsilon }((\underline{u%
}_{1}-\underline{u}_{2})^{+}))\varphi \right\rangle +\left\langle -\Delta
_{p}\underline{u}_{2},\varepsilon -\xi _{\varepsilon }((\underline{u}_{1}-%
\underline{u}_{2})^{+})\varphi \right\rangle \\ 
\geq \int_{\Omega }|\nabla \underline{u}_{1}|^{p-2}(\nabla \underline{u}%
_{1},\nabla \varphi )_{%
%TCIMACRO{\U{211d} }%
%BeginExpansion
\mathbb{R}
%EndExpansion
^{N}}\xi _{\varepsilon }((\underline{u}_{1}-\underline{u}_{2})^{+})\text{ }dx
\\ 
+\int_{\Omega }|\nabla \underline{u}_{2}|^{p-2}(\nabla \underline{u}%
_{2},\nabla \varphi )_{%
%TCIMACRO{\U{211d} }%
%BeginExpansion
\mathbb{R}
%EndExpansion
^{N}}\left( \varepsilon-\xi _{\varepsilon }((\underline{u}_{1}-\underline{u}%
_{2})^{+})\right) \text{ }dx.%
\end{array}
\label{42*}
\end{equation}%
Then, gathering (\ref{40}) together with (\ref{41}) and (\ref{40*}) together
with (\ref{41*}), by means of (\ref{42}) and (\ref{42*}), one gets 
\begin{equation*}
\begin{array}{l}
\int_{\Omega }|\nabla \overline{u}_{1}|^{p-2}(\nabla \overline{u}_{1},\nabla
\varphi )_{%
%TCIMACRO{\U{211d} }%
%BeginExpansion
\mathbb{R}
%EndExpansion
^{N}}\frac{1}{\varepsilon }\xi _{\varepsilon }((\overline{u}_{1}-\overline{u}%
_{2})^{-})\text{ }dx \\ 
+\int_{\Omega }|\nabla \overline{u}_{2}|^{p-2}(\nabla \overline{u}%
_{2},\nabla \varphi )_{%
%TCIMACRO{\U{211d} }%
%BeginExpansion
\mathbb{R}
%EndExpansion
^{N}}\left( 1-\frac{1}{\varepsilon }\xi _{\varepsilon }((\overline{u}_{1}-%
\overline{u}_{2})^{-})\right) \text{ }dx \\ 
\geq \int_{\Omega }f_{1}(x,\overline{u}_{1},w_{2})\frac{1}{\varepsilon }\xi
_{\varepsilon }((\overline{u}_{1}-\overline{u}_{2})^{-})\varphi \text{ }%
dx+\int_{\Omega }f_{1}(x,\overline{u}_{2},w_{2})\left( 1-\frac{1}{%
\varepsilon }\xi _{\varepsilon }((\overline{u}_{1}-\overline{u}%
_{2})^{-})\right) \varphi \text{ }dx,%
\end{array}%
\end{equation*}%
and%
\begin{equation*}
\begin{array}{l}
\int_{\Omega }|\nabla \underline{u}_{1}|^{p-2}(\nabla \underline{u}%
_{1},\nabla \varphi )_{%
%TCIMACRO{\U{211d} }%
%BeginExpansion
\mathbb{R}
%EndExpansion
^{N}}\frac{1}{\varepsilon }\xi _{\varepsilon }((\underline{u}_{1}-\underline{%
u}_{2})^{+})\text{ }dx \\ 
+\int_{\Omega }|\nabla \underline{u}_{2}|^{p-2}(\nabla \underline{u}%
_{2},\nabla \varphi )_{%
%TCIMACRO{\U{211d} }%
%BeginExpansion
\mathbb{R}
%EndExpansion
^{N}}\left( 1-\frac{1}{\varepsilon }\xi _{\varepsilon }((\underline{u}_{1}-%
\underline{u}_{2})^{+})\right) \text{ }dx \\ 
\leq \int_{\Omega }f_{1}(x,\underline{u}_{1},w_{2})\frac{1}{\varepsilon }\xi
_{\varepsilon }((\underline{u}_{1}-\underline{u}_{2})^{+})\varphi \text{ }%
dx+\int_{\Omega }f_{1}(x,\underline{u}_{2},w_{2})\left( 1-\frac{1}{%
\varepsilon }\xi _{\varepsilon }((\underline{u}_{1}-\underline{u}%
_{2})^{+})\right) \varphi \text{ }dx,%
\end{array}%
\end{equation*}%
for all $w_{2}\in W^{1,q}(\Omega )$ such that $\underline v\leq w_{2}\leq
\tilde v$ a.e. in $\Omega $. Passing to the limit as $\varepsilon
\rightarrow 0$ and noticing that%
\begin{equation*}
\left\{ 
\begin{array}{c}
\frac{1}{\varepsilon }\xi _{\varepsilon }((\overline{u}_{1}-\overline{u}%
_{2})^{-}\rightarrow \chi _{\{\overline{u}_{1}<\overline{u}_{2}\}}(x) \\ 
\frac{1}{\varepsilon }\xi _{\varepsilon }((\underline{u}_{1}-\underline{u}%
_{2})^{+}\rightarrow \chi _{\{\underline{u}_{1}>\underline{u}_{2}\}}(x)%
\end{array}%
\right. \text{, \ a.e. in }\Omega \text{ as }\varepsilon \rightarrow 0,
\end{equation*}%
where $\chi _{\mathcal{A}}$ is the characteristic function of the set $%
\mathcal{A},$ we obtain%
\begin{equation*}
\int_{\Omega }|\nabla \tilde u|^{p-2}\nabla \tilde u\nabla \varphi \, dx\geq
\int_{\Omega }f_{1}(x,\tilde u,w_{2})\varphi \text{ }dx
\end{equation*}%
and%
\begin{equation*}
\int_{\Omega }|\nabla \underline u|^{p-2}\nabla \underline u\nabla \varphi
\, dx\leq \int_{\Omega }f_{1}(x,\underline u,w_{2})\varphi \text{ }dx
\end{equation*}%
for all $\varphi \in C_{c}^{1}(\Omega ),$ $\varphi \geq 0$ a.e. in $\Omega $
and for all $w_{2}\in W^{1,q}(\Omega )$ within $[\underline v,\tilde v]$
a.e. in $\Omega $.

In the same manner we get%
\begin{equation*}
\int_{\Omega }|\nabla \tilde v|^{q-2}\nabla \tilde v\nabla \psi\, dx\geq
\int_{\Omega }f_{2}(x,w_{1},\tilde v)\psi \text{ }dx
\end{equation*}%
and%
\begin{equation*}
\int_{\Omega }|\nabla \underline v|^{q-2}\nabla \underline v\nabla \psi \,
dx\leq \int_{\Omega }f_{2}(x,w_{1},\underline v)\psi \, dx
\end{equation*}%
for all $\psi \in C_{c}^{1}(\Omega ),$ $\psi \geq 0$ a.e. in $\Omega $ and
for all $w_{1}\in W^{1,p}(\Omega )$ within $[\underline u,\tilde u]$ a.e. in 
$\Omega $. Finally, since $C_{c}^{1}(\Omega )$ is dense in both $%
W^{1,p}(\Omega )$ and $W^{1,q}(\Omega )$, we achieve the desired conclusion.
\end{proof}

\begin{theorem}
\label{T4} Let $\left( \underline{u},\underline{v}\right) ,$ $\left( 
\overline{u},\overline{v}\right) \in C^{1}(\overline{\Omega })\times C^{1}(%
\overline{\Omega })$ be a pair of sub-supersolution (\ref{p*}) with $%
\underline{u},\underline{v}\geq c_{0}d(x)$ in $\Omega $ for some constant $%
c_{0}>0$ and suppose there exist constants $k_{1},k_{2}>0$ and $-1<\alpha
,\beta <0$ such that%
\begin{equation}
\begin{array}{c}
\left\vert f_{1}(x,u,v)\right\vert \leq k_{1}d(x)^{\alpha }\text{ and }%
\left\vert f_{2}(x,u,v)\right\vert \leq k_{2}d(x)^{\beta }%
\end{array}
\label{h}
\end{equation}
a.e. in $\Omega$ and for every $(u,v)\in [\underline u,\overline u]\times[%
\underline v,\overline v]$. \newline
Then problem (\ref{p*}) has a smallest solution $(u^{\ast },v^{\ast })$ and
a biggest solution $(u^{+},v^{+})$ in $C_{0}^{1,\gamma }(\overline{\Omega }%
)\times C_{0}^{1,\gamma }(\overline{\Omega })$ for certain $\gamma \in (0,1)$%
, within $[\underline{u},\overline{u}]\times \lbrack \underline{v},\overline{%
v}]$.
\end{theorem}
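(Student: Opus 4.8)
The plan is to produce first a single solution in the order interval, then bootstrap its regularity, and finally extract the extremal solutions by a Zorn-type argument. Since $\underline u,\underline v\ge c_0 d>0$ in $\Omega$, the singular nonlinearities $f_i(x,w_1,w_2)$ are well defined for every $(w_1,w_2)$ in the rectangle $R:=[\underline u,\overline u]\times[\underline v,\overline v]$, and by \eqref{h} they are controlled by $k_1 d^{\alpha}$ and $k_2 d^{\beta}$ with $\alpha,\beta\in(-1,0)$. Writing $d^{\alpha}\varphi=d^{\alpha+1}(\varphi/d)$ and combining H\"older's inequality with the Hardy inequality $\|\varphi/d\|_p\le C\|\nabla\varphi\|_p$ (valid because $\partial\Omega$ is smooth), one gets
\begin{equation*}
\Big|\int_\Omega f_1(x,w_1,w_2)\varphi\,dx\Big|\le k_1\|d^{\alpha+1}\|_{p'}\|\varphi/d\|_p\le C\|\varphi\|_{1,p},
\end{equation*}
with $C$ independent of $(w_1,w_2)\in R$ since $d^{\alpha+1}\in L^{\infty}(\Omega)\subset L^{p'}(\Omega)$; thus $f_1(\cdot,w_1,w_2)\in W^{-1,p'}(\Omega)$ uniformly on $R$, and symmetrically $f_2(\cdot,w_1,w_2)\in W^{-1,q'}(\Omega)$. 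This is exactly the integrability needed to invoke the sub-supersolution theorem for singular systems \cite[Theorem~2.1]{KM}, which then yields a weak solution $(u,v)\in W_0^{1,p}(\Omega)\times W_0^{1,q}(\Omega)$ of \eqref{p*} with $(\underline u,\underline v)\le(u,v)\le(\overline u,\overline v)$.

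\emph{Regularity.} Because $u$ is squeezed between $\underline u,\overline u\in C^1(\overline\Omega)$ it is bounded, and since $|f_1(x,u,v)|\le k_1 d^{\alpha}$ with $\alpha>-1$ the right-hand side is singular only of order $<1$ at $\partial\Omega$; the corresponding regularity theory for the $p$-Laplacian (as in \cite{GiaSchiTak}) gives $u\in C_0^{1,\gamma}(\overline\Omega)$ for some $\gamma\in(0,1)$, and likewise $v\in C_0^{1,\gamma}(\overline\Omega)$. Hence the set $\mathcal S$ of solutions of \eqref{p*} that lie in $C_0^{1,\gamma}(\overline\Omega)\times C_0^{1,\gamma}(\overline\Omega)$ and in $R$ is nonempty.

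\emph{Extremal solutions.} Endow $\mathcal S$ with the pointwise order. I will first check that every chain $\mathcal C\subset\mathcal S$ has a lower bound in $\mathcal S$: by separability of $C(\overline\Omega)$ one may pick a decreasing sequence $(u_n,v_n)\in\mathcal C$ with $(u_n,v_n)\downarrow(\hat u,\hat v):=\inf\mathcal C$ pointwise. Testing the equation for $u_n$ with $u_n$ and using $|f_1(x,u_n,v_n)|\le k_1 d^{\alpha}$ (uniform in $n$) bounds $(u_n)$ in $W_0^{1,p}(\Omega)$, so $u_n\rightharpoonup\hat u$; the continuity of $f_1$ in $(s_1,s_2)$ together with the domination $|f_1(x,u_n,v_n)(u_n-\hat u)|\le k_1 d^{\alpha}|u_1-\hat u|\in L^1(\Omega)$ and $\int_\Omega f_1(x,u_n,v_n)\varphi\,dx\to\int_\Omega f_1(x,\hat u,\hat v)\varphi\,dx$ show, on taking $\varphi=u_n-\hat u$, that $\langle -\Delta_p u_n+\Delta_p\hat u,\,u_n-\hat u\rangle\to 0$; the $S_{+}$-property of $-\Delta_p$ on $W_0^{1,p}(\Omega)$ then forces $u_n\to\hat u$ strongly, and passing to the limit gives $-\Delta_p\hat u=f_1(x,\hat u,\hat v)$. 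Arguing identically for $\hat v$ and applying the regularity step, $(\hat u,\hat v)\in\mathcal S$ is a lower bound of $\mathcal C$, so by Zorn's Lemma $\mathcal S$ has a minimal element $(u^{\ast},v^{\ast})$. To promote it to the \emph{smallest} solution I will show $\mathcal S$ is downward directed: given $(u_1,v_1),(u_2,v_2)\in\mathcal S$, Lemma~\ref{L5} applied to the pairs $(\underline u,\underline v),(u_i,v_i)$ ($i=1,2$) produces the sub-supersolution pair $(\underline u,\underline v),(\min\{u_1,u_2\},\min\{v_1,v_2\})$, and the first two steps then furnish an element of $\mathcal S$ below $(\min\{u_1,u_2\},\min\{v_1,v_2\})$; combined with minimality this yields $(u^{\ast},v^{\ast})\le(u,v)$ for every $(u,v)\in\mathcal S$. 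Replacing throughout $\inf$ by $\sup$, decreasing by increasing sequences, and Lemma~\ref{L5} by the pairs $(\max\{u_1,u_2\},\max\{v_1,v_2\}),(\overline u,\overline v)$ produces the biggest solution $(u^{+},v^{+})$.

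\emph{Main obstacle.} The two delicate points are the $C^{1,\gamma}$ regularity in the second step and the directedness in the last. For the former, the datum $d^{\alpha}$ is genuinely singular at $\partial\Omega$ and lies in $L^r(\Omega)$ only for $r<1/|\alpha|$, which need not reach $L^N$, so Lieberman's estimates cannot be quoted verbatim and one must rely on the sharper theory for singular right-hand sides. For the latter, one must verify that each solution together with $(\underline u,\underline v)$ forms a sub-supersolution pair \emph{in the bracket sense} of \eqref{p*}: the subsolution inequalities are inherited from the global pair, but the supersolution inequalities for $(\min\{u_1,u_2\},\min\{v_1,v_2\})$ require comparing $f_i$ at different values of the off-diagonal variable, so it is here — and in the $S_{+}$-based passage to the limit — that the structure of $f_1,f_2$ must really be exploited.
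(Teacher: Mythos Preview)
Your proof is essentially the paper's own: nonemptiness via \cite[Theorem~2.1]{KM}, downward directedness via Lemma~\ref{L5}, the chain argument via Hardy--Sobolev bounds and the $S_{+}$-property of $-\Delta_p$, Zorn's lemma, and then the passage from minimal to smallest through directedness, with $C^{1,\gamma}$ regularity supplied by the singular $p$-Laplacian theory (the paper cites \cite{Hai} where you cite \cite{GiaSchiTak}). The concern you flag in your final paragraph---that a solution $(u_i,v_i)$ must be checked to be a supersolution in the bracket sense before Lemma~\ref{L5} can be applied---is a point the paper simply asserts without further justification.
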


\begin{proof}
We only prove the existence of a smallest positive solution $(u^{\ast
},v^{\ast })\in C_{0}^{1,\gamma }(\overline{\Omega })\times C_{0}^{1,\gamma
}(\overline{\Omega })$ within $[\underline{u},\overline{u}]\times \lbrack 
\underline{v},\overline{v}]$. That of a biggest positive solution within $[%
\underline{u},\overline{u}]\times \lbrack \underline{v},\overline{v}]$ can
be carried out in the similar way.

Denote by $S$ the set of all $(w_{1},w_{2})\in \lbrack \underline{u},%
\overline{u}]\times \lbrack \underline{v},\overline{v}]$ that are solutions
of (\ref{p*}). It is well know from \cite[Theorem 2.1]{KM} that under
assumption (\ref{h}), system (\ref{p*}) has a (positive) solution $(u,v)\in
C_{0}^{1,\gamma }(\overline{\Omega })\times C_{0}^{1,\gamma }(\overline{%
\Omega })$ for certain $\gamma \in (0,1)$, located in $[\underline{u},%
\overline{u}]\times \lbrack \underline{v},\overline{v}].$ Thus, $S$ is not
empty. Moreover, let $(u_{1},v_{1}),(u_{2},v_{2})\in S$. Since $(\underline
u,\underline v)$, $(u_1,v_1)$ and $(\underline u,\underline v)$, $(u_2,v_2)$
form two pairs of sub-supersolution, if we put $(\tilde{u},\tilde{v})=(\min
\{u_{1},u_{2}\},\min \{v_{1},v_{2}\})\in (W_{0}^{1,p}(\Omega )\cap
L^\infty(\Omega))\times (W_{0}^{1,q}(\Omega)\cap L^\infty(\Omega))$, by
virtue of Lemma \ref{L5}, $(\underline u,\underline v)$, $(\tilde u,\tilde
v) $ form a pair of sub-supersolution for (\ref{p*}). Then, owing to \cite[%
Theorem 2.1]{KM}, there exists a solution of (\ref{p*}) in $([\underline{u},%
\tilde{u}]\cap C_{0}^{1}(\overline{\Omega }))\times ([\underline{v},\tilde{v}%
]\cap C_{0}^{1}(\overline{\Omega }))$, which proves that $S$ is downward
directed.

Now, let us consider a chain $C$ in $S$. Then there is a sequence $%
\{(u_{k},v_{k})\}_{k\geq 1}\subset C$ such that $\inf C=\inf_{k\geq
1}(u_{k},v_{k})$ (see \cite[pag. 336]{DunSch}) and it is not restrictive
assume $\{(u_{k},v_{k})\}_{k\geq 1}$ to be decreasing. Hence, if we put $%
(\hat u, \hat v)=\inf C$, one has that $u_k\to \hat u $ and $v_k\to \hat v $
a.e. in $\Omega$, that is 
\begin{equation}  \label{4}
(\hat u,\hat v)\in[\underline u,\overline u]\times [\underline v,\overline
v].
\end{equation}
Moreover, because $(u_{k},v_{k})$ for $k\geq 1$ are solutions of (\ref{p*})
we have%
\begin{equation}
\left\Vert \nabla u_{k}\right\Vert _{p}^{p}=\int_{\Omega
}f_{1}(x,u_{k},v_{k})u_{k}\ dx\leq k_1 \int_{\Omega }d(x)^{\alpha }\overline
u\, dx  \label{1}
\end{equation}%
and%
\begin{equation}
\left\Vert \nabla v_{k}\right\Vert _{q}^{q}=\int_{\Omega
}f_{2}(x,u_{k},v_{k})v_{k}\ dx\leq k_{2}\int_{\Omega }d(x)^{\beta }\overline
v\, dx.  \label{2}
\end{equation}%
Since $-1<\alpha ,\beta <0$, by virtue of the Hardy-Sobolev inequality (see,
e.g., \cite{AC} or \cite{OpiKuf}), the last integrals in (\ref{1}) and (\ref%
{2}) are finite which in turn imply that $\{u_{k}\}$ and $\{v_{k}\}$ are
bounded in $W_{0}^{1,p}(\Omega )$ and $W_{0}^{1,q}(\Omega ),$ respectively.
So, passing to relabelled subsequences and recalling the Rellich embedding
theorem, we have 
\begin{equation}
\begin{array}{c}
(u_{k},v_{k})\rightharpoonup \left( \hat u ,\hat v\right) \text{ in }%
W_{0}^{1,p}(\Omega )\times W_{0}^{1,q}\left( \Omega \right) \text{.}%
\end{array}
\label{3}
\end{equation}%
%
%
%
%Thus, the Rellich embedding Theorem %enable us to get%
%\begin{equation}
%\left( u^{\ast },v^{\ast }\right) \in %\lbrack \underline{u},\overline{u}%
%]\times \lbrack %\underline{v},\overline{v}]\text{ in %}\Omega \text{.}
%\label{4}
%\end{equation}%
Using $\varphi =u_{k}-\hat u$ and $\psi =v_{k}-\hat v$ as test functions we
find that 
\begin{equation*}
\begin{array}{l}
\langle -\Delta _{p}u_{k},u_{k}-\hat u\rangle =\int_{\Omega
}f_{1}(x,u_{k},v_{k})(u_{k}-\hat u)\ dx%
\end{array}%
\end{equation*}%
and 
\begin{equation*}
\begin{array}{l}
\langle -\Delta _{q}v_{k},v_{k}-\hat v\rangle =\int_{\Omega
}f_{2}(x,u_{k},v_{k})(v_{k}-\hat v)\ dx.%
\end{array}%
\end{equation*}%
From (\ref{h}) and since $(u_{k},v_{k})\in S$ for all $k\in 
%TCIMACRO{\U{2115} }%
%BeginExpansion
\mathbb{N}
%EndExpansion
$, in view of (\ref{4}) we have%
\begin{equation*}
f_{1}(x,u_{k},v_{k})(u_{k}-\hat u)\leq k_{1}d(x)^{\alpha }(u_{k}-\hat u)\leq
2k_{1}d(x)^{\alpha }\left\Vert \overline{u}\right\Vert _{\infty }
\end{equation*}%
and%
\begin{equation*}
f_{2}(x,u_{k},v_{k})(v_{k}-\hat v)\leq k_{2}d(x)^{\beta }(v_{k}-\hat v)\leq
k_{2}d(x)^{\beta }\left\Vert \overline{v}\right\Vert _{\infty }.
\end{equation*}%
Thank's to \cite[Lemma]{LM}, we deduce that $f_{1}(x,u_{k},v_{k})(u_{k}-\hat
u)$ and $f_{2}(x,u_{k},v_{k})(v_{k}-\hat v)$ are dominated by $L^{1}(\Omega
) $ functions and using the dominated convergence theorem, we obtain%
\begin{equation*}
\begin{array}{c}
\underset{k\rightarrow \infty }{\lim }\langle -\Delta _{p}u_{k},u_{k}-\hat
u\rangle =\underset{k\rightarrow \infty }{\lim }\langle -\Delta
_{q}v_{k},v_{k}-\hat v\rangle =0.%
\end{array}%
\end{equation*}%
Then the $S_{+}$-property of $-\Delta _{p}$ and $-\Delta _{q}$ on $%
W_{0}^{1,p}(\Omega )$ and $W_{0}^{1,q}(\Omega ),$ respectively, guarantees
that 
\begin{equation*}
\begin{array}{c}
(u_{k},v_{k})\longrightarrow (\hat u,\hat v)\text{ in }W_{0}^{1,p}(\Omega
)\times W_{0}^{1,q}(\Omega )%
\end{array}%
\end{equation*}%
and therefore $(\hat u,\hat v)$ is a positive solution of problem (\ref{p*}%
). Consequently, $(\hat u,\hat v)=\inf C$ belongs to $S$. Then Zorn's Lemma
can be applied which provides a minimal element $(u^*,v^*)$ of $S$.
Furthermore, since $S\subset [\underline u,\overline u]\times[\underline v,
\overline v]$, (\ref{h}) enables us to apply the regularity theory (see \cite%
{Hai}) to infer that $(u^*,v^*)\in C_{0}^{1,\gamma }(\overline{\Omega }%
)\times C_{0}^{1,\gamma }(\overline{ \Omega })$ for some $\gamma \in (0,1)$.

The proof is completed by showing that $(u^{\ast },v^{\ast })$ is the
smallest solution of (\ref{p*}) in $S$. To this end, let $(u,v)\in S$.
Bearing in mind that $S$ is downward directed, there is $(\mathring{u},%
\mathring{v})\in S$ with $\mathring{u}\leq u^{\ast },$ $\mathring{v}\leq
v^{\ast }$ and $\mathring{u}\leq u,$ $\mathring{v}\leq v$. Since $(u^{\ast
},v^{\ast })$ is a minimal element of $S$, it turns out that $(u^{\ast
},v^{\ast })=(\mathring{u},\mathring{v})\leq (u,v)$. The same reasoning can
be used to prove the existence of a biggest solution $(u^{+},v^{+})$ in $%
C_{0}^{1,\gamma }(\overline{\Omega })\times C_{0}^{1,\gamma }(\overline{%
\Omega }),$ for certain $\gamma \in (0,1)$, within $[\underline{u},\overline{%
u}]\times \lbrack \underline{v},\overline{v}]$. This completes the proof.
\end{proof}

\section{Auxiliary system}

\label{S3}

For every $z_{1},z_{2}\in C_{0}^{1}(\Omega ),$ let us state the auxiliary
problem%
\begin{equation}
\left\{ 
\begin{array}{ll}
-\Delta _{p}u=f(x,u,v,\nabla z_{1},\nabla z_{2}) & \text{in }\Omega , \\ 
-\Delta _{q}v=g(x,u,v,\nabla z_{1},\nabla z_{2}) & \text{in }\Omega , \\ 
u,v>0 & \text{in }\Omega , \\ 
u,v=0 & \text{on }\partial \Omega .%
\end{array}%
\right.  \tag{$P_{(z_{1},z_{2})}$}  \label{pz}
\end{equation}

With the aim of finding pairs of sub-supersolutions of problem (\ref{pz}),
let us define $\xi _{1}$ and $\xi _{2}$ in $C_{0}^{1,\beta }(\overline{%
\Omega }),$ $\beta \in (0,1)$, as the unique solutions of the problems 
\begin{equation}
\left\{ 
\begin{array}{ll}
-\Delta _{p}\xi _{1}=1 & \text{in }\Omega , \\ 
\xi _{1}=0 & \text{on }\partial \Omega%
\end{array}%
\right. \text{ \ and \ }\left\{ 
\begin{array}{ll}
-\Delta _{q}\xi _{2}=1 & \text{in }\Omega , \\ 
\xi _{2}=0 & \text{on }\partial \Omega ,%
\end{array}%
\right.  \label{20}
\end{equation}%
respectively, which are known to satisfy 
\begin{equation}
c_{0}d(x)\leq \xi _{1}(x)\leq c_{1}d(x)\text{ \ and \ }c_{0}^{\prime
}d(x)\leq \xi _{2}(x)\leq c_{1}^{\prime }d(x)\text{ \ in }\Omega ,
\label{21}
\end{equation}%
with constants $c_{i},c_{i}^{\prime }>0$ (see \cite{CMM}).

Set%
\begin{equation}
(\overline{u},\overline{v})=C(\xi _{1},\xi _{2})\text{ \ \ and \ \ }\left( 
\underline{u},\underline{v}\right) =C^{-1}\left( \phi _{1,p},\phi
_{1,q}\right) ,  \label{11}
\end{equation}%
where $C>1$ is a constant that will be fixed large enough and denote by 
\begin{equation}
\begin{array}{c}
M=\max \{\max_\Omega \phi _{1,p},\max_\Omega \phi _{1,q}\}.%
\end{array}
\label{15}
\end{equation}%
Obviously, as a consequence of the maximum principle, we have 
\begin{equation}  \label{ordered}
\left(\overline{u},\overline{v}\right) \geq \left(\underline{u},\underline{v}%
\right)\ \hbox{in}\ \overline{\Omega }\ \hbox{for}\ C>1\ \hbox{large}.
\end{equation}

Recall from \cite[Lemma 1]{BE} that if $h_{1},h_{2}\in L^{\infty }(\Omega )$
and $u\in W_{0}^{1,p}(\Omega ),$ $v\in W_{0}^{1,q}(\Omega )$ are the weak
solutions of problems 
\begin{equation*}
\left\{ 
\begin{array}{ll}
-\Delta _{p}u=h_{1} & \text{ in }\Omega \\ 
u=0 & \text{ on }\partial \Omega%
\end{array}%
\right. ,\text{ \ }\left\{ 
\begin{array}{ll}
-\Delta _{q}v=h_{2} & \text{ in }\Omega \\ 
v=0 & \text{ on }\partial \Omega ,%
\end{array}%
\right.
\end{equation*}%
there exist positive constants $K_{p}=K_{p}(p,N,\Omega )$ and $%
K_{q}=K_{q}(q,N,\Omega )$ such that 
\begin{equation}
\Vert \nabla u\Vert _{\infty }\leq K_{p}\Vert h_{1}\Vert _{\infty }^{\frac{1%
}{p-1}}\text{ \ and \ }\Vert \nabla v\Vert _{\infty }\leq K_{q}\Vert
h_{2}\Vert _{\infty }^{\frac{1}{q-1}}.  \label{12}
\end{equation}%
Denote by 
\begin{equation*}
\begin{array}{c}
R_{1}=\max \{\left\Vert \xi _{1}\right\Vert _{C_{0}^{1,\beta }(\overline{%
\Omega })},K_{p}\}\text{ and }R_{2}=\max \{\left\Vert \xi _{2}\right\Vert
_{C_{0}^{1,\beta }(\overline{\Omega })},K_{q}\}.%
\end{array}%
\end{equation*}%
Using the functions in (\ref{11}), we introduce the sets%
\begin{equation*}
\begin{array}{c}
\mathcal{K}_{1}(C)=\left\{ y\in C_{0}^{1}(\overline{\Omega }):\underline{u}%
\leq y\leq \overline{u}\text{ in }\Omega ,\text{ }\left\Vert \nabla
y\right\Vert _{\infty }\leq CR_{1}\right\}%
\end{array}%
\end{equation*}%
and 
\begin{equation*}
\begin{array}{c}
\mathcal{K}_{2}(C)=\left\{ y\in C_{0}^{1}(\overline{\Omega }):\underline{v}%
\leq y\leq \overline{v}\text{ in }\Omega ,\text{\ }\left\Vert \nabla
y\right\Vert _{\infty }\leq CR_{2}\right\} ,%
\end{array}%
\end{equation*}%
which are closed, bounded and convex in $C_{0}^{1}(\overline{\Omega })$.

\begin{proposition}
\label{P1} Assume $\mathrm{H}(f)$ and $\mathrm{H}(g)$. Then, for $C>1$
sufficiently large and for every $(z_{1},z_{2})\in \mathcal{K}_{1}(C)\times 
\mathcal{K}_{2}(C)$, problem (\ref{pz}) has a smallest solution $(u^{\ast
},v^{\ast })_{(z_{1},z_{2})}$ in $C_{0}^{1,\gamma }(\overline{\Omega }%
)\times C_{0}^{1,\gamma }(\overline{\Omega })$, for certain $\gamma \in
(0,1) $, within $[\underline{u},\overline{u}]\times \lbrack \underline{v},%
\overline{v}]$.
\end{proposition}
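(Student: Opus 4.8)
The plan is to read $(P_{(z_{1},z_{2})})$ as an instance of $(P_{(f_{1},f_{2})})$ with the gradient slots frozen, and then to invoke Theorem \ref{T4}. Fix $C>1$ (to be enlarged) and $(z_{1},z_{2})\in\mathcal{K}_{1}(C)\times\mathcal{K}_{2}(C)$, and set
\[
f_{1}(x,u,v):=f(x,u,v,\nabla z_{1}(x),\nabla z_{2}(x)),\qquad f_{2}(x,u,v):=g(x,u,v,\nabla z_{1}(x),\nabla z_{2}(x)).
\]
Since $\nabla z_{1},\nabla z_{2}\in C(\overline{\Omega})$, $f_{1}$ and $f_{2}$ are Carath\'{e}odory on $\Omega\times(0,+\infty)\times(0,+\infty)$ and $(P_{(z_{1},z_{2})})$ coincides with $(P_{(f_{1},f_{2})})$. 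The candidate sub- and supersolution pair is the one fixed in (\ref{11}), namely $(\underline{u},\underline{v})=C^{-1}(\phi_{1,p},\phi_{1,q})$ and $(\overline{u},\overline{v})=C(\xi_{1},\xi_{2})$. These belong to $C^{1}(\overline{\Omega})\times C^{1}(\overline{\Omega})$, they satisfy $(\underline{u},\underline{v})\leq(\overline{u},\overline{v})$ by (\ref{ordered}) for $C$ large, and, by (\ref{9}), $\underline{u},\underline{v}\geq C^{-1}l\,d(x)$, so the requirement $\underline{u},\underline{v}\geq c_{0}d(x)$ of Theorem \ref{T4} holds with $c_{0}=C^{-1}l$.

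Next I would check the pointwise bound (\ref{h}). Let $(u,v)\in[\underline{u},\overline{u}]\times[\underline{v},\overline{v}]$; by (\ref{21}) and (\ref{9}), $u\geq C^{-1}l\,d(x)$ and $v\leq Cc_{1}^{\prime}d(x)\leq Cc_{1}^{\prime}\Vert d\Vert_{\infty}$, so, using $\alpha_{1}<0<\beta_{1}$, $\Vert\nabla z_{i}\Vert_{\infty}\leq CR_{i}$ and the upper bound in $\mathrm{H}(f)$,
\[
0<f_{1}(x,u,v)\leq M_{1}(C^{-1}l)^{\alpha_{1}}(Cc_{1}^{\prime}\Vert d\Vert_{\infty})^{\beta_{1}}d(x)^{\alpha_{1}}+(CR_{1})^{\gamma_{1}}+(CR_{2})^{\theta_{1}}.
\]
Because $d(x)^{\alpha_{1}}\geq\Vert d\Vert_{\infty}^{\alpha_{1}}$ (here $\alpha_{1}<0$), the two constant terms are absorbed into a multiple of $d(x)^{\alpha_{1}}$, whence $|f_{1}(x,u,v)|\leq k_{1}d(x)^{\alpha_{1}}$ for a constant $k_{1}=k_{1}(C)>0$. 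Symmetrically, using $v\geq C^{-1}l\,d(x)$, $u\leq Cc_{1}\Vert d\Vert_{\infty}$, $\beta_{2}<0<\alpha_{2}$ and the upper bound in $\mathrm{H}(g)$, one gets $|f_{2}(x,u,v)|\leq k_{2}d(x)^{\beta_{2}}$. Since $-1<\alpha_{1},\beta_{2}<0$, this is precisely (\ref{h}) with $\alpha=\alpha_{1}$ and $\beta=\beta_{2}$.

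The core step is to show that $(\underline{u},\underline{v})$, $(\overline{u},\overline{v})$ form a sub-supersolution pair for $(P_{(f_{1},f_{2})})$ once $C$ is large. By $(p-1)$-homogeneity of $-\Delta_{p}$ and the defining relations of $\phi_{1,p}$ and of $\xi_{1}$ in (\ref{20}) one has $-\Delta_{p}\underline{u}=\lambda_{1,p}\underline{u}^{p-1}$ and $-\Delta_{p}\overline{u}=C^{p-1}$, and likewise $-\Delta_{q}\underline{v}=\lambda_{1,q}\underline{v}^{q-1}$, $-\Delta_{q}\overline{v}=C^{q-1}$. For the first subsolution inequality, since $\beta_{1}>0$ the worst case of the lower bound in $\mathrm{H}(f)$ over $w_{2}\in[\underline{v},\overline{v}]$ is at $w_{2}=\underline{v}$, so it suffices to have $\lambda_{1,p}\underline{u}^{p-1}\leq m_{1}\underline{u}^{\alpha_{1}}\underline{v}^{\beta_{1}}$, which rearranges to $\lambda_{1,p}\phi_{1,p}^{p-1-\alpha_{1}}\phi_{1,q}^{-\beta_{1}}\leq m_{1}C^{(p-1)-(\alpha_{1}+\beta_{1})}$; by (\ref{9}) the left-hand side is a bounded constant (its $d$-exponent $p-1-\alpha_{1}-\beta_{1}$ is positive since $\alpha_{1}+\beta_{1}<p-1$), while the right-hand side tends to $+\infty$ as $C\to\infty$ because $\alpha_{1}+\beta_{1}<p-1$. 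For the first supersolution inequality, the worst case over $w_{2}$ is $w_{2}=\overline{v}$, and it suffices to have $C^{p-1}\geq M_{1}\overline{u}^{\alpha_{1}}\overline{v}^{\beta_{1}}+(CR_{1})^{\gamma_{1}}+(CR_{2})^{\theta_{1}}$; here $\overline{u}^{\alpha_{1}}\overline{v}^{\beta_{1}}=C^{\alpha_{1}+\beta_{1}}\xi_{1}^{\alpha_{1}}\xi_{2}^{\beta_{1}}$ and, by (\ref{21}), $\xi_{1}^{\alpha_{1}}\xi_{2}^{\beta_{1}}$ is bounded above by a multiple of $d(x)^{\alpha_{1}+\beta_{1}}$, hence bounded because $\alpha_{1}+\beta_{1}\geq 0$; thus the right-hand side is $O(C^{\max\{\alpha_{1}+\beta_{1},\gamma_{1},\theta_{1}\}})=o(C^{p-1})$ and the inequality holds for $C$ large. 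The two inequalities for the $v$-equation follow in the same way with the roles of $u$ and $v$ swapped, using $\alpha_{2}>0>\beta_{2}$, $0\leq\alpha_{2}+\beta_{2}<q-1$ and $\max\{\gamma_{2},\theta_{2}\}<q-1$. Since $\phi_{1,p},\phi_{1,q},\xi_{1},\xi_{2}$ are weak solutions of the respective problems, multiplying these a.e. inequalities by a test function $\varphi\geq 0$ (resp.\ $\psi\geq 0$) and integrating gives exactly the weak inequalities in the definition of sub-supersolution; moreover the thresholds on $C$ thereby produced depend only on the structural constants, hence not on $(z_{1},z_{2})$.

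Choosing $C$ beyond all these finitely many thresholds, Theorem \ref{T4} applies to $(P_{(f_{1},f_{2})})=(P_{(z_{1},z_{2})})$ and yields a smallest solution $(u^{\ast},v^{\ast})\in C_{0}^{1,\gamma}(\overline{\Omega})\times C_{0}^{1,\gamma}(\overline{\Omega})$, for some $\gamma\in(0,1)$, within $[\underline{u},\overline{u}]\times[\underline{v},\overline{v}]$; denoting it $(u^{\ast},v^{\ast})_{(z_{1},z_{2})}$ completes the proof. I expect the delicate point to be the core step: choosing a single $C$ large enough to force all four sub/supersolution inequalities simultaneously and uniformly in $(z_{1},z_{2})$. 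This is exactly where the structural restrictions enter — $0\leq\alpha_{i}+\beta_{i}$ keeps the singular products $\xi_{1}^{\alpha_{i}}\xi_{2}^{\beta_{i}}$ (and their $\phi_{1,p},\phi_{1,q}$ analogues) bounded near $\partial\Omega$, while $\alpha_{i}+\beta_{i},\gamma_{i},\theta_{i}<p-1$ (resp.\ $<q-1$) make the powers of $C$ on the right-hand sides lose against $C^{p-1}$ (resp.\ $C^{q-1}$).
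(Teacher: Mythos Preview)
Your proof is correct and follows essentially the same approach as the paper: freeze the gradients to read $(P_{(z_1,z_2)})$ as an instance of $(P_{(f_1,f_2)})$, verify that $(\underline{u},\underline{v})$, $(\overline{u},\overline{v})$ from (\ref{11}) form a sub-supersolution pair for $C$ large (uniformly in $(z_1,z_2)$), check the growth bound (\ref{h}) with exponents $\alpha_1,\beta_2$, and apply Theorem~\ref{T4}. The only cosmetic differences are that the paper presents the sub-supersolution verification before the bound (\ref{h}) and uses (\ref{10}) to compare $\phi_{1,p}$ with $\phi_{1,q}$, whereas you go straight through (\ref{9}) to control both by $d(x)$; the computations are otherwise the same.
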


\begin{proof}
The proof is related to Theorem \ref{T4}. First, let us prove that

\begin{itemize}
\item[\protect\underline{Claim}:] \textit{For every $(z_{1},z_{2})\in 
\mathcal{K}_{1}(C)\times \mathcal{K}_{2}(C)$, $(\underline{u},\underline{v})$%
, $(\bar{u},\bar{v})$ form a pair of sub-supersolution for (\ref{pz})
provided that $C$ is large enough.}
\end{itemize}

\noindent Using $\mathrm{H}(f)$, $\mathrm{H}(g)$, (\ref{11}), (\ref{15}) and
(\ref{10}) we get%
\begin{equation*}
\begin{array}{l}
\underline{u}^{-\alpha _{1}}\underline{v}^{-\beta _{1}}(-\Delta _{p}%
\underline{u})=C^{-(p-1-\alpha _{1}-\beta _{1})}\lambda _{1,p}\phi
_{1,p}^{p-1-\alpha _{1}}\phi _{1,q}^{-\beta _{1}} \\ 
\leq C^{-(p-1-\alpha _{1}-\beta _{1})}\lambda _{1,p}l_{1}^{-\beta _{1}}\phi
_{1,p}^{p-1-\alpha _{1}-\beta _{1}} \\ 
\leq C^{-(p-1-\alpha _{1}-\beta _{1})}\lambda _{1,p}l_{1}^{-\beta
_{1}}M^{p-1-\alpha _{1}-\beta _{1}}\leq m_{1}\text{ in }\Omega%
\end{array}%
\end{equation*}%
and%
\begin{equation*}
\begin{array}{l}
\underline{u}^{-\alpha _{2}}\underline{v}^{-\beta _{2}}(-\Delta _{q}%
\underline{v})=C^{-(q-1-\alpha _{2}-\beta _{2})}\lambda _{1,q}\phi
_{1,q}^{q-1-\beta _{2}}\phi _{1,p}^{-\alpha _{2}} \\ 
\leq C^{-(q-1-\alpha _{2}-\beta _{2})}\lambda _{1,q}l_{2}^{\alpha _{2}}\phi
_{1,q}^{q-1-\alpha _{2}-\beta _{2}} \\ 
C^{-(q-1-\alpha _{2}-\beta _{2})}\lambda _{1,q}l_{2}^{\alpha
_{2}}M^{q-1-\alpha _{2}-\beta _{2}}\leq m_{2}\text{ in }\Omega ,%
\end{array}%
\end{equation*}%
provided that $C>1$ is large enough (such that (\ref{ordered}) holds too).
Then, it is readily seen from $\mathrm{H}(f)$ and $\mathrm{H}(g)$ that%
%\begin{equation*}
\begin{eqnarray}  \label{3.6}
\int_{\Omega }\left\vert \nabla \underline{u}\right\vert ^{p-2}\nabla 
\underline{u}\nabla \varphi & \leq & m_{1}\int_{\Omega }\underline{u}%
^{\alpha _{1}}\underline{v}^{\beta _{1}}\varphi \leq m_{1}\int_{\Omega }%
\underline{u}^{\alpha _{1}}w_{2}^{\beta _{1}}\varphi \\
& \leq & \int_{\Omega }f(x,\underline{u},w_{2},\nabla z_{1},\nabla
z_{2})\varphi  \notag
\end{eqnarray}%
%
%
%
%\end{equation*}%
and%
%\begin{equation*}
\begin{eqnarray}  \label{3.7}
\int_{\Omega }\left\vert \nabla \underline{v}\right\vert ^{q-2}\nabla 
\underline{v}\nabla \psi & \leq & m_{2}\int_{\Omega }\underline{u}^{\alpha
_{2}}\underline{v}^{\beta _{2}}\psi \leq m_{2}\int_{\Omega }w_{1}^{\alpha
_{2}}\underline{v}^{\beta _{2}}\psi \\
& \leq &\int_{\Omega }g(x,w_{1},\underline{v},\nabla z_{1},\nabla z_{2})\psi
,  \notag
\end{eqnarray}%
%
%
%
%\end{equation*}%
for all $\left( \varphi ,\psi \right) \in W_{0}^{1,p}\left( \Omega \right)
\times W_{0}^{1,q}\left( \Omega \right) $ with $\varphi ,\psi \geq 0$ in $%
\Omega ,$ for all $\left( w_{1},w_{2}\right) \in W^{1,p}(\Omega )\times
W^{1,q}(\Omega )$ satisfying $\underline{u}\leq w_{1}\leq \overline{u}$ and $%
\underline{v}\leq w_{2}\leq \overline{v}$ a.e. in $\Omega $, and for $%
(z_{1},z_{2})\in \mathcal{K}_{1}(C)\times \mathcal{K}_{2}(C)$.

Now, taking into account (\ref{20}), (\ref{21}), (\ref{15}), $\mathrm{H}(f)$
and $\mathrm{H}(g)$, we derive the estimates 
\begin{equation*}
\begin{array}{l}
\overline{u}^{-\alpha _{1}}\overline{v}^{-\beta _{1}}(-\Delta _{p}\overline{u%
}-(CR_{1})^{\gamma _{1}}-(CR_{2})^{\theta _{1}}) \\ 
=C^{-\alpha _{1}-\beta _{1}}\xi _{1}^{-\alpha _{1}}\xi _{2}^{-\beta
_{1}}(C^{p-1}-(CR_{1})^{\gamma _{1}}-(CR_{2})^{\theta _{1}}) \\ 
=C^{p-1-\alpha _{1}-\beta _{1}}\xi _{1}^{-\alpha _{1}}\xi _{2}^{-\beta _{1}} 
\left[ 1-C^{-(p-1)}(C^{\gamma _{1}}R_{1}^{\gamma _{1}}+C^{\theta
_{1}}R_{2}^{\theta _{1}}\right] \\ 
\geq C^{p-1-\alpha _{1}-\beta _{1}}(c_{0}d(x))^{-\alpha _{1}}(c_{1}^{\prime
}d(x))^{-\beta _{1}}\left[ 1-(C^{\gamma _{1}-(p-1)}R_{1}^{\gamma
_{1}}+C^{\theta _{1}-(p-1)}R_{2}^{\theta _{1}})\right] \\ 
\geq C^{p-1-\alpha _{1}-\beta _{1}}c_{0}^{-\alpha _{1}}(c_{1}^{\prime
})^{-\beta _{1}}d(x)^{-(\alpha _{1}+\beta _{1})}\left[ 1-(C^{\gamma
_{1}-(p-1)}R_{1}^{\gamma _{1}}+C^{\theta _{1}-(p-1)}R_{2}^{\theta _{1}})%
\right] \\ 
\geq M_{1}\text{ in }\ \Omega%
\end{array}%
\end{equation*}%
and%
\begin{equation*}
\begin{array}{l}
\overline{u}^{-\alpha _{2}}\overline{v}^{-\beta _{2}}(-\Delta _{q}\overline{v%
}-(CR_{1})^{\gamma _{2}}-(CR_{2})^{\theta _{2}}) \\ 
{\geq }C^{q-1-\alpha _{2}-\beta _{2}}c_{1}^{-\alpha _{2}}(c_{0}^{\prime
})^{-\beta _{2}}d(x)^{-(\alpha _{2}+\beta _{2})}\left[ 1-(C^{\gamma
_{2}-(q-1)}R_{1}^{\gamma _{2}}+C^{\theta _{2}-(q-1)}R_{2}^{\theta _{2}})%
\right] \\ 
\geq M_{2}\text{ in }\ \Omega,%
\end{array}%
\end{equation*}%
provided that $C>1$ is sufficiently large. Consequently, it turns out that%
%\begin{equation}
\begin{eqnarray}  \label{33}
\int_{\Omega }\left\vert \nabla \overline{u}\right\vert ^{p-2}\nabla 
\overline{u}\nabla \varphi \text{ }dx & \geq & \int_{\Omega }(M_{1}\overline{%
u}^{\alpha _{1}}\overline{v}^{\beta _{1}}+|\nabla z_{1}|^{\gamma
_{1}}+|\nabla z_{2}|^{\theta _{1}})\varphi \text{ }dx \\
& \geq & \int_{\Omega }(M_{1}\overline{u}^{\alpha _{1}}w_{2}^{\beta
_{1}}+|\nabla z_{1}|^{\gamma _{1}}+|\nabla z_{2}|^{\theta _{1}})\varphi 
\text{ }dx  \notag \\
& \geq & \int_{\Omega }f(x,\overline{u},w_{2},\nabla z_{1},\nabla
z_{2})\varphi \text{ }dx  \notag
\end{eqnarray}
%\end{equation}%
and %\begin{equation}
\begin{eqnarray}  \label{34}
\int_{\Omega }\left\vert \nabla \overline{v}\right\vert ^{q-2}\nabla 
\overline{v}\nabla \psi \text{ }dx & \geq & \int_{\Omega }M_{2}(\overline{u}%
^{\alpha _{2}}\overline{v}^{\beta _{2}}+|\nabla z_{1}|^{\gamma _{2}}+|\nabla
z_{2}|^{\theta _{2}})\psi \text{ }dx \\
& \geq & \int_{\Omega }M_{2}(w_{1}^{\alpha _{2}}\overline{v}^{\beta
_{2}}+|\nabla z_{1}|^{\gamma _{2}}+|\nabla z_{2}|^{\theta _{2}})\psi \text{ }%
dx  \notag \\
& \geq & \int_{\Omega }g(x,w_{1},\overline{v},\nabla z_{1},\nabla z_{2})\psi 
\text{ }dx  \notag
\end{eqnarray}
%\end{equation}%
for all $\left( \varphi ,\psi \right) \in W_{0}^{1,p}\left( \Omega \right)
\times W_{0}^{1,q}\left( \Omega \right) $ with $\varphi ,\psi \geq 0$ in $%
\Omega $, for all $\left( w_{1},w_{2}\right) \in W^{1,p}(\Omega )\times
W^{1,q}(\Omega )$ satisfying $\underline{u}\leq w_{1}\leq \overline{u}$ and $%
\underline{v}\leq w_{2}\leq \overline{v}$ a.e. in $\Omega $, and for $%
(z_{1},z_{2})\in \mathcal{K}_{1}(C)\times \mathcal{K}_{2}(C)$.\newline
Putting together (\ref{3.6}), (\ref{3.7}), (\ref{33}) and (\ref{34}) we get
the \underline{Claim}.

Furthermore, for every $(z_{1},z_{2})\in \mathcal{K}_{1}(C)\times \mathcal{K}%
_{2}(C)$ and for every $(u,v)\in \lbrack \underline{u},\overline{u}]\times
\lbrack \underline{v},\overline{v}]$, from $\mathrm{H}(f)$, $\mathrm{H}(g)$,
(\ref{11}), (\ref{15}) and (\ref{9}) we have the estimates%
\begin{equation}
\begin{array}{l}
f(x,u,v,\nabla z_{1},\nabla z_{2})\leq M_{1}u^{\alpha _{1}}v^{\beta
_{1}}+|\nabla z_{1}|^{\gamma _{1}}+|\nabla z_{2}|^{\theta _{1}} \\ 
\leq M_{1}\underline{u}^{\alpha _{1}}\overline{v}^{\beta
_{1}}+(CR_{1})^{\gamma _{1}}+(CR_{2})^{\theta _{1}} \\ 
\leq M_{1}C^{-\alpha _{1}+\beta _{1}}\phi _{1,p}^{\alpha _{1}}\left\Vert \xi
_{2}\right\Vert _{\infty }^{\beta _{1}}+(CR_{1})^{\gamma
_{1}}+(CR_{2})^{\theta _{1}} \\ 
\leq \phi _{1,p}^{\alpha _{1}}\left( M_{1}C^{-\alpha _{1}+\beta
_{1}}\left\Vert \xi _{2}\right\Vert _{\infty }^{\beta _{1}}+M^{-\alpha
_{1}}(CR_{1})^{\gamma _{1}}+M^{-\alpha _{1}}(CR_{2})^{\theta _{1}}\right) \\ 
\leq C_{1}d(x)^{\alpha _{1}}\text{ \ in }\Omega%
\end{array}
\label{5}
\end{equation}%
and%
\begin{equation}
\begin{array}{l}
g(x,u,v,\nabla z_{1},\nabla z_{2})\leq M_{2}y_{1}^{\alpha _{2}}y_{2}^{\beta
_{2}}+|\nabla z_{1}|^{\gamma _{2}}+|\nabla z_{2}|^{\theta _{2}} \\ 
\leq M_{2}\overline{u}^{\alpha _{2}}\underline{v}^{\beta
_{2}}+(CR_{1})^{\gamma _{2}}+(CR_{2})^{\theta _{2}} \\ 
\leq M_{2}C^{\alpha _{2}-\beta _{2}}\left\Vert \xi _{1}\right\Vert _{\infty
}^{\alpha _{2}}\phi _{1,q}^{\beta _{2}}+(CR_{1})^{\gamma
_{2}}+(CR_{2})^{\theta _{2}} \\ 
\leq \phi _{1,q}^{\beta _{2}}\left( M_{2}C^{\alpha _{2}+\beta
_{2}}\left\Vert \xi _{1}\right\Vert _{\infty }^{\alpha _{2}}+M^{-\beta
_{2}}(CR_{1})^{\gamma _{2}}+M^{-\beta _{2}}(CR_{2})^{\theta _{2}}\right) \\ 
\leq C_{2}d(x)^{\beta _{2}}\text{ \ in }\Omega ,%
\end{array}
\label{6}
\end{equation}%
for some positive constants $C_{1}$ and $C_{2}$ independent from $u$, $v$, $%
z_1$ and $z_2$. Then, owing to Theorem \ref{T4}, it follows that if $C>1$ is
large enough (according to the \underline{Claim}), for every $%
(z_{1},z_{2})\in \mathcal{K}_{1}(C)\times \mathcal{K}_{2}(C)$ problem (\ref%
{pz}) has a smallest solution $(u^{\ast },v^{\ast })_{(z_{1},z_{2})}\in
C_{0}^{1,\gamma }(\overline{\Omega })\times C_{0}^{1,\gamma }(\overline{%
\Omega })$ for certain $\gamma \in (0,1)$, within $[\underline{u},\overline{u%
}]\times \lbrack \underline{v},\overline{v}]$. This complete the proof.
\end{proof}

\begin{remark}
\label{Clarge} \textrm{We wish explicitly to point out that from the proof
of Proposition \ref{P1} one can derive an estimate of the largeness of $C>1$%
. In particular, the choice of $C$, that first of all is related to (\ref%
{ordered}), is crucial for verifying the \underline{Claim} and, as a
consequence, that for every $(z_1,z_2)\in \mathcal{K}_{1}(C)\times \mathcal{K%
}_{2}(C)$ the set of the solutions of problem (\ref{pz}) is nonempty.}
\end{remark}

\bigskip

In what follows, $C>1$ will be assumed large enough such that for any $%
(z_{1},z_{2})\in \mathcal{K}_{1}(C)\times \mathcal{K} _{2}(C)$, put 
\begin{equation*}
S_{(z_{1},z_{2})}=\{(u,v)\in[ \underline{u},\overline{u}]\times[ \underline{v%
}, \overline{v}]:\ (u,v)\in C_{0}^{1}(\overline{\Omega})\times C_{0}^{1}(%
\overline{\Omega})\ \hbox{solves } (P_{(z_1,z_2)}) \},
\end{equation*}
then $S_{(z_1,z_2)}\neq\emptyset$.

\begin{lemma}
\label{L4}Assume $\mathrm{H}(f)$ and $\mathrm{H}(g)$ hold and let $C>1$ be
large enough. If $\{(z_{1,n},z_{2,n})\}$ is a sequence in $\mathcal{K}%
_{1}(C)\times \mathcal{K}_{2}(C)$ such that $(z_{1,n},z_{2,n})\rightarrow
(z_{1},z_{2})$ in $\mathcal{K}_{1}(C)\times \mathcal{K}_{2}(C)$, then for
any $(\breve{u},\breve{v})\in S_{(z_{1},z_{2})}$, there exists $(\breve{u}%
_{n},\breve{v}_{n})\in S_{(z_{1,n},z_{2,n})}$ such that $(\breve{u}_{n},%
\breve{v}_{n})\rightarrow (\breve{u},\breve{v})$ in $C_{0}^{1}(\overline{%
\Omega })\times C_{0}^{1}(\overline{\Omega })$.
\end{lemma}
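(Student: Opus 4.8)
The plan is to realize each $(\breve u_n,\breve v_n)$ as a solution of $(P_{(z_{1,n},z_{2,n})})$ that is obtained by a monotone iteration started from $(\breve u,\breve v)$, and then to show that this sequence converges to $(\breve u,\breve v)$. First I would observe that, since $(\breve u,\breve v)\in S_{(z_1,z_2)}$, the pair $(\breve u,\breve v)$ together with $(\overline u,\overline v)$ almost forms a sub-supersolution pair for $(P_{(z_{1,n},z_{2,n})})$: the upper inequalities \eqref{33}--\eqref{34} hold for $(\overline u,\overline v)$ uniformly in $(z_1,z_2)\in\mathcal K_1(C)\times\mathcal K_2(C)$ by the Claim in the proof of Proposition \ref{P1}, hence also for the index $n$; for the lower part one uses that $\breve u$ solves the equation with data $\nabla z_1,\nabla z_2$, so $-\Delta_p\breve u=f(x,\breve u,\breve v,\nabla z_1,\nabla z_2)$, which need not be $\leq f(x,\breve u,w_2,\nabla z_{1,n},\nabla z_{2,n})$. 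To get around this I would instead take as subsolution the fixed pair $(\underline u,\underline v)=C^{-1}(\phi_{1,p},\phi_{1,q})$, which by the Claim is a subsolution for every $(z_{1,n},z_{2,n})$, and note $(\underline u,\underline v)\leq(\breve u,\breve v)\leq(\overline u,\overline v)$. Thus $(P_{(z_{1,n},z_{2,n})})$ has solutions in $[\underline u,\overline u]\times[\underline v,\overline v]$, and by Theorem \ref{T4} (whose hypothesis \eqref{h} holds uniformly in $n$ thanks to the bounds \eqref{5}--\eqref{6}, which are independent of $z_1,z_2$) there is a smallest one $(u^\ast,v^\ast)_{(z_{1,n},z_{2,n})}$. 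However the smallest solution need not be close to $(\breve u,\breve v)$; instead I would run the monotone iteration downward from $(\breve u,\breve v)$ for the $n$-th problem to produce a solution $(\breve u_n,\breve v_n)$ sandwiched between the smallest solution and $(\breve u,\breve v)$ — but monotone iteration is delicate here because of the cross terms, so more robustly I would simply select \emph{any} $(\breve u_n,\breve v_n)\in S_{(z_{1,n},z_{2,n})}$ and prove convergence of a subsequence to a solution of the limit problem, then use the smallest-solution structure to pin down the limit.

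Concretely, the key steps are: (1) For each $n$, pick $(\breve u_n,\breve v_n)\in S_{(z_{1,n},z_{2,n})}\subset[\underline u,\overline u]\times[\underline v,\overline v]$ (nonempty by Proposition \ref{P1}). (2) Testing the equations with $\breve u_n$ and $\breve v_n$ and using the uniform bounds \eqref{5}--\eqref{6} together with the Hardy--Sobolev inequality exactly as in \eqref{1}--\eqref{2}, deduce that $\{\breve u_n\}$ is bounded in $W_0^{1,p}(\Omega)$ and $\{\breve v_n\}$ in $W_0^{1,q}(\Omega)$; pass to a subsequence with $\breve u_n\rightharpoonup \breve u_0$, $\breve v_n\rightharpoonup\breve v_0$ and $\breve u_n\to\breve u_0$, $\breve v_n\to\breve v_0$ in $L^p,L^q$ and a.e. (3) Since $(z_{1,n},z_{2,n})\to(z_1,z_2)$ in $C_0^1(\overline\Omega)\times C_0^1(\overline\Omega)$, we have $\nabla z_{1,n}\to\nabla z_1$ and $\nabla z_{2,n}\to\nabla z_2$ uniformly, so $f(x,\breve u_n,\breve v_n,\nabla z_{1,n},\nabla z_{2,n})\to f(x,\breve u_0,\breve v_0,\nabla z_1,\nabla z_2)$ a.e.; testing with $\breve u_n-\breve u_0$ and using the domination by $C_1 d(x)^{\alpha_1}\in L^1(\Omega)$ (via the Hardy-type lemma \cite[Lemma]{LM}, as in the proof of Theorem \ref{T4}) and dominated convergence, conclude $\langle-\Delta_p\breve u_n,\breve u_n-\breve u_0\rangle\to0$ and similarly for $\breve v_n$. (4) Invoke the $S_+$-property of $-\Delta_p$ on $W_0^{1,p}(\Omega)$ and of $-\Delta_q$ on $W_0^{1,q}(\Omega)$ to upgrade to strong convergence $\breve u_n\to\breve u_0$ in $W_0^{1,p}$, $\breve v_n\to\breve v_0$ in $W_0^{1,q}$, and pass to the limit in the weak formulations to see that $(\breve u_0,\breve v_0)$ solves $(P_{(z_1,z_2)})$. (5) Since $(\breve u_0,\breve v_0)\in[\underline u,\overline u]\times[\underline v,\overline v]$, the uniform bound \eqref{5}--\eqref{6} gives $|f|\le C_1 d(x)^{\alpha_1}$, $|g|\le C_2 d(x)^{\beta_2}$, so by the regularity results \cite{Hai,KM} the convergence is in fact in $C_0^1(\overline\Omega)\times C_0^1(\overline\Omega)$ (bootstrapping through $C_0^{1,\gamma}$ and compact embedding). (6) Finally, the delicate point: arrange that $(\breve u_0,\breve v_0)=(\breve u,\breve v)$. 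For a fixed target $(\breve u,\breve v)$ this is \emph{not} automatic for an arbitrary choice of $(\breve u_n,\breve v_n)$; so instead, for each $n$ I would use that $S_{(z_{1,n},z_{2,n})}$ is downward directed (Lemma \ref{L5} plus \cite[Theorem 2.1]{KM}, as in Theorem \ref{T4}) to choose $(\breve u_n,\breve v_n)$ to be the smallest solution of $(P_{(z_{1,n},z_{2,n})})$ lying below $(\breve u,\breve v)$ — i.e.\ solve the problem in $[\underline u,\breve u]\times[\underline v,\breve v]$ using that $(\underline u,\underline v),(\breve u,\breve v)$ is a sub-supersolution pair for the limit data \emph{and}, after shrinking, approximately for the $n$-th data; then $(\breve u_n,\breve v_n)\le(\breve u,\breve v)$ forces $\breve u_0\le\breve u$, $\breve v_0\le\breve v$, while a symmetric argument with the smallest solution above gives the reverse inequality, whence $(\breve u_0,\breve v_0)=(\breve u,\breve v)$.

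I expect step (6) to be the main obstacle. The difficulty is that $(\breve u,\breve v)$ need not be a subsolution or a supersolution for the perturbed problem $(P_{(z_{1,n},z_{2,n})})$ — the gradient data have moved — so the naive sandwiching breaks down, and one must either (a) exploit that the perturbation in the right-hand side is $O(\|z_{1,n}-z_1\|_{C^1}+\|z_{2,n}-z_2\|_{C^1})\to0$ uniformly and absorb it by slightly enlarging the super/subsolution pair (replacing $(\overline u,\overline v)$ by $(1+\varepsilon_n)(\overline u,\overline v)$ and $(\underline u,\underline v)$ by $(1-\varepsilon_n)$ times itself, with $\varepsilon_n\to0$, and checking the Claim still holds for $C$ large and $n$ large), thereby obtaining solutions $(\breve u_n,\breve v_n)$ forced into a shrinking neighbourhood of $(\breve u,\breve v)$; or (b) appeal to the minimality in $S_{(z_{1,n},z_{2,n})}$ together with upper and lower semicontinuity of the smallest/biggest solution maps. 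Approach (a) is the cleanest: once the solutions are trapped in $[(1-\varepsilon_n)\breve u,(1+\varepsilon_n)\breve u]\times[(1-\varepsilon_n)\breve v,(1+\varepsilon_n)\breve v]$ (after verifying that this interval indeed carries a sub-supersolution pair for the $n$-th problem, which is where the continuity of $f,g$ in the gradient slots and the strict inequalities with $M_1,M_2,m_1,m_2$ are used), the $C_0^1$-convergence from steps (2)--(5) automatically yields $(\breve u_n,\breve v_n)\to(\breve u,\breve v)$, completing the proof.
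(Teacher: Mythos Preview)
Your diagnosis is accurate: step~(6) is the whole point of the lemma, and your proposed fixes for it do not go through. Approach~(a) requires that $[(1-\varepsilon_n)\breve u,(1+\varepsilon_n)\breve u]\times[(1-\varepsilon_n)\breve v,(1+\varepsilon_n)\breve v]$ carry a sub--supersolution pair for $(P_{(z_{1,n},z_{2,n})})$, but the hypotheses only give the two--sided bounds $m_1 s_1^{\alpha_1}s_2^{\beta_1}\le f\le M_1 s_1^{\alpha_1}s_2^{\beta_1}+|\xi_1|^{\gamma_1}+|\xi_2|^{\theta_1}$ with $m_1<M_1$ and no monotonicity in $\xi_1,\xi_2$; a computation of $-\Delta_p((1+\varepsilon)\breve u)=(1+\varepsilon)^{p-1}f(x,\breve u,\breve v,\nabla z_1,\nabla z_2)$ versus $f(x,(1+\varepsilon)\breve u,w_2,\nabla z_{1,n},\nabla z_{2,n})$ shows you would need something like $m_1\ge M_1$ to close the inequality, which is false. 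The ``uniform $O(\|z_{1,n}-z_1\|_{C^1}+\cdots)$'' claim is also unjustified: $f$ is merely Carath\'eodory in $(\xi_1,\xi_2)$, and near $\partial\Omega$ the variables $s_1,s_2$ approach $0$, so you cannot invoke uniform continuity on a compact. Approach~(b) is circular, since semicontinuity of the extremal--solution maps is essentially what Lemma~\ref{L4} is used to establish in Lemma~\ref{L2}.

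The paper's proof supplies exactly the missing mechanism: it \emph{penalizes} the operator, setting
\[
L_p(u)=-\Delta_p u+\hat\rho\,d(x)^{\alpha_1+\beta_1-(p-1)}|u|^{p-2}u,\qquad
\hat f=f+\hat\rho\,d(x)^{\alpha_1+\beta_1-(p-1)}s_1^{p-1},
\]
with $\hat\rho>0$ chosen (via \eqref{17}) so that $t\mapsto \mu_1 t^{\alpha_1}v^{\beta_1}+\hat\rho\,d(x)^{\alpha_1+\beta_1-(p-1)}t^{p-1}$ is nondecreasing for $t\ge\min\{\underline u,\underline v\}$; analogously for $L_q,\hat g$. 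Then $L_p,L_q$ are strictly monotone and coercive, so by Minty--Browder the frozen problem $L_p w_1=\hat f(x,u,v,\nabla y_1,\nabla y_2)$, $L_q w_2=\hat g(x,u,v,\nabla y_1,\nabla y_2)$ has a \emph{unique} solution for each fixed $(u,v,y_1,y_2)$. Starting from $(\breve u,\breve v)$ one builds iterates $(w_{1,n}^k,w_{2,n}^k)$ solving the frozen problem with data $(w_{1,n}^{k-1},w_{2,n}^{k-1},z_{1,n},z_{2,n})$; the monotonicity of \eqref{monotone} and the comparison principle keep them in $[\underline u,\overline u]\times[\underline v,\overline v]$, and the \emph{uniqueness} forces $(w_{1,n}^k,w_{2,n}^k)\to(\breve u,\breve v)$ as $n\to\infty$ for every fixed $k$ (since $(\breve u,\breve v)$ is the unique solution of the limiting frozen problem). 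Uniform $C_0^{1,\gamma}$ bounds from \eqref{14} and \cite{Hai} give compactness of the double sequence, and a diagonal argument as in \cite[Lemma~2.5]{FMP} produces, for each $n$, a limit point $(\breve u_n,\breve v_n)\in S_{(z_{1,n},z_{2,n})}$ with $(\breve u_n,\breve v_n)\to(\breve u,\breve v)$ in $C_0^1\times C_0^1$. The penalization and the resulting uniqueness are the ideas you are missing; your steps (2)--(5) are fine as auxiliary compactness facts but cannot by themselves pin down the limit.
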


\begin{proof}
Fix $C>1$ large enough, put 
\begin{equation*}
\hat{\rho}=\max \left\{ -\frac{\alpha _{1}M_{1}}{p-1}(C^{-1}l)^{\alpha
_{1}-p+1}\hat{l}^{\beta _{1}}C^{-\beta _{1}},-\frac{\beta _{2}M_{2}}{q-1}%
(C^{-1}l)^{\beta _{2}-q+1}\hat{l}^{\alpha _{2}}C^{-\alpha _{2}}\right\} ,
\end{equation*}%
and observe that 
\begin{equation}
\left\{ 
\begin{array}{c}
\alpha _{1}\mu _{1}t^{\alpha _{1}-1}\underline{v}(x)^{\beta _{1}}+\hat{\rho}%
(p-1)d(x)^{\alpha _{1}+\beta _{1}-(p-1)}t^{p-2}\geq 0 \\ 
\beta _{2}\mu _{2}\underline{u}(x)^{\alpha _{2}}t^{\beta _{2}-1}+\hat{\rho}%
(q-1)d(x)^{\alpha _{2}+\beta _{2}-(q-1)}t^{q-2}\geq 0,%
\end{array}%
\right.  \label{17}
\end{equation}%
uniformly in $x\in \Omega $, for all $t\geq \min \{\underline{u}(x),%
\underline{v}(x)\}$, where $\mu _{1}\in \{m_{1},\,M_{1}\}$ and $\mu _{2}\in
\{m_{2},\,M_{2}\}$.\newline
Indeed, from (\ref{9}) and bearing in mind that $\alpha _{1}-p+1<0<\beta
_{1} $ one has that 
\begin{equation*}
\left( \frac{\underline{v}(x)}{d(x)}\right) ^{\beta _{1}}\leq C^{-\beta _{1}}%
\hat{l}_{1}^{\beta },\quad \max \left\{ \left( \frac{\underline{u}(x)}{d(x)}%
\right) ^{\alpha _{1}-p+1},\left( \frac{\underline{v}(x)}{d(x)}\right)
^{\alpha _{1}-p+1}\right\} \leq (C^{-1}l)^{\alpha _{1}-p+1}
\end{equation*}%
for all $x\in \Omega $. Hence, 
\begin{eqnarray*}
\hat{\rho} &\geq &-\frac{\alpha _{1}M_{1}}{p-1}\left( \frac{t}{d(x)}\right)
^{\alpha _{1}-p+1}\left( \frac{\underline{v}(x)}{d(x)}\right) ^{\beta _{1}}
\\
&=&\frac{-\alpha _{1}M_{1}t^{\alpha _{1}-1}\underline{v}(x)^{\beta _{1}}}{%
(p-1)d(x)^{\alpha _{1}+\beta _{1}-(p-1)}t^{p-2}} \\
&\geq &\frac{-\alpha _{1}m_{1}t^{\alpha _{1}-1}\underline{v}(x)^{\beta _{1}}%
}{(p-1)d(x)^{\alpha _{1}+\beta _{1}-(p-1)}t^{p-2}}.
\end{eqnarray*}%
for all $t\geq \min \{\underline{u}(x),\underline{v}(x)\}$ and uniformly in $%
\Omega $, so that the first inequality in (\ref{17}) holds. The second
inequality can be verified arguing in analogy.\newline
Here, condition (\ref{17}) guaranties that for all $(u,v)\in \mathcal{K}%
_{1}(C)\times \mathcal{K}_{2}(C)$ the functions%
\begin{equation}
\mu _{1}t^{\alpha _{1}}v(x)^{\beta _{1}}+\hat{\rho}d(x)^{\alpha _{1}+\beta
_{1}-(p-1)}t^{p-1},\quad \mu _{2}u(x)^{\alpha _{2}}t^{\beta _{2}}+\hat{\rho}%
d(x)^{\alpha _{2}+\beta _{2}-(q-1)}t^{q-1}  \label{monotone}
\end{equation}%
are monotone with respect to $t\geq \min \{\underline{u}(x),\underline{v}%
(x)\}$.\newline
Let now $\hat{f}$, $\hat{g}$ be the functions defined by 
\begin{equation*}
\hat{f}(x,s_{1},s_{2},\xi _{1},\xi _{2})=f(x,s_{1},s_{2},\xi _{1},\xi _{2})+%
\hat{\rho}d(x)^{\alpha _{1}+\beta _{1}-(p-1)}s_{1}^{p-1}
\end{equation*}%
\begin{equation*}
\hat{g}(x,s_{1},s_{2},\xi _{1},\xi _{2})=g(x,s_{1},s_{2},\xi _{1},\xi _{2})+%
\hat{\rho}d(x)^{\alpha _{2}+\beta _{2}-(q-1)}s_{2}^{q-1}
\end{equation*}%
for $(x,s_{1},s_{2},\xi _{1},\xi _{2})\in \Omega \times (0,+\infty )\times
(0,+\infty )\times \mathbb{R}^{2N}$.\newline
Arguing as in (\ref{5}) and (\ref{6}), bearing in mind (\ref{21}) and (\ref%
{11}), there exist two positive constants $\hat{C}_{1}$ and $\hat{C}_{2}$
such that 
\begin{equation}
\begin{array}{c}
\hat{f}(x,u,v,\nabla y_{1},\nabla y_{2})\leq \hat{C}_{1}d(x)^{\alpha
_{1}},\quad \hat{g}(x,u,v,\nabla y_{1},\nabla y_{2})\leq \hat{C}%
_{2}d(x)^{\beta _{2}}%
\end{array}
\label{14}
\end{equation}%
a.e. in $\Omega $, for every $(u,v)\in \lbrack \underline{u},\overline{u}%
]\times \lbrack \underline{v},\overline{v}]$ and every $(y_{1},y_{2})\in 
\mathcal{K}_{1}(C)\times \mathcal{K}_{2}(C)$.

Let us consider now the following differential operators $%
L_{p}:W_{0}^{1,p}(\Omega )\rightarrow W^{-1,p^{\prime }}(\Omega )$, $%
L_{q}:W_{0}^{1,q}(\Omega )\rightarrow W^{-1,q^{\prime }}(\Omega )$ defined
by 
\begin{equation*}
L_{p}(u)=-\Delta _{p}u+\hat{\rho}d(x)^{\alpha _{1}+\beta
_{1}-(p-1)}|u|^{p-2}u
\end{equation*}%
for all $u\in W_{0}^{1,p}(\Omega )$, and 
\begin{equation*}
L_{q}(u)=-\Delta _{q}u+\hat{\rho}d(x)^{\alpha _{2}+\beta
_{2}-(q-1)}|u|^{q-2}u
\end{equation*}%
for all $u\in W_{0}^{1,q}(\Omega )$. Observing that $p^{\prime }(\alpha
_{1}+\beta _{1})-p>-p$ and $q^{\prime }(\alpha _{2}+\beta _{2})-q>-q$, one
can apply \cite[Theorem 19.8]{OpiKuf} ($\partial \Omega $ is assumed to be
smooth enough) in order to obtain 
\begin{equation*}
d(x)^{\alpha _{1}+\beta _{1}-(p-1)}|u|^{p-2}u\in L^{p^{\prime }}(\Omega
),\quad d(x)^{\alpha _{2}+\beta _{2}-(q-1)}|u|^{q-2}u\in L^{q^{\prime
}}(\Omega ),
\end{equation*}%
namely $L_{p}$ and $L_{q}$ are well defined.\newline
A direct computation shows that $L_{p}$ and $L_{q}$ are demicontinuous,
coercive and strictly monotone. Hence, in view of (\ref{14}), one can apply
the Minty-Browder theorem and conclude that for every $(u,v)\in \lbrack 
\underline{u},\overline{u}]\times \lbrack \underline{v},\overline{v}]$, for
every $(y_{1},y_{2})\in \mathcal{K}_{1}(C)\times \mathcal{K}_{2}(C)$ the
problem 
\begin{equation}
\left\{ 
\begin{array}{ll}
L_{p}(w_{1})=\hat{f}(x,u,v,\nabla y_{1},\nabla y_{2}) & \text{in }\Omega ,
\\ 
L_{q}(w_{2})=\hat{g}(x,u,v,\nabla y_{1},\nabla y_{2}) & \text{in }\Omega ,
\\ 
w_{1},w_{2}=0 & \text{on }\partial \Omega ,%
\end{array}%
\right.  \tag{$P_{(u,v,y_{1},y_{2})}$}  \label{ppp}
\end{equation}%
admits a unique solution.

At this point fix $(z_1,z_2)\in \mathcal{K}_{1}(C)\times \mathcal{K}_{2}(C)$%
, $(\breve u,\breve v)\in S_{(z_1,z_2)}$ and let $\{(z_{1,n},z_{2,n})\}$ be
a sequence in $\mathcal{K}_{1}(C)\times \mathcal{K}_{2}(C)$ such that $%
(z_{1,n},z_{2,n})\to (z_1,z_2)$.\newline
Obviously, $(\breve u,\breve v)\in S_{(z_1,z_2)}$ implies that 
\begin{equation}  \label{unique}
(\breve u,\breve v)\ \hbox{is the unique solution of } (P_{(\breve u,\breve
v,z_1,z_2)}).
\end{equation}
Fix $n\in 
%TCIMACRO{\U{2115} }%
%BeginExpansion
\mathbb{N}
%EndExpansion
$ and let $(w_{1,n}^{0},w_{2,n}^{0})$ be the unique solution of the problem $%
(P_{(\breve u,\breve v,z_{1,n},z_{2,n})})$.

By $\mathrm{H}(f)$ and $\mathrm{H}(g)$, since $(\breve{u},\breve{v})\in
[\underline u,\overline u]\times[\underline v,\overline v]$, using the
monotonicity of the functions introduced in (\ref{monotone}) and the
computations pointed out in (\ref{3.6}) and (\ref{3.7}), it follows that%
\begin{eqnarray*}
L_{p}(w_{1,n}^{0})& = & \hat f(x,\breve u, \breve v,\nabla z_{1,n},\nabla
z_{2,n})= f(x,\breve{u},\breve{v},\nabla z_{1,n},\nabla z_{2,n})+\hat\rho
d(x)^{\alpha _{1}+\beta _{1}-(p-1)}\breve{u}^{p-1} \\
& \geq & m_{1}\breve{u}^{\alpha _{1}}\breve{v}^{\beta _{1}}+\hat\rho
d(x)^{\alpha _{1}+\beta _{1}-(p-1)}\breve{u}^{p-1} \\
& \geq & m_{1}\underline{u}^{\alpha _{1}}\breve{v}^{\beta _{1}}+\hat\rho
d(x)^{\alpha _{1}+\beta _{1}-(p-1)}\underline{u}^{p-1} \\
& \geq & m_{1}\underline{u}^{\alpha _{1}}\underline{v}^{\beta _{1}}+\hat\rho
d(x)^{\alpha _{1}+\beta _{1}-(p-1)}\underline{u}^{p-1} \\
& \geq & -\Delta _{p}\underline{u}+\hat\rho d(x)^{\alpha _{1}+\beta
_{1}-(p-1)}\underline{u}^{p-1}=L_{p}(\underline{u})
\end{eqnarray*}%
and similarly, we obtain%
\begin{eqnarray*}
L_{q}(w_{2,n}^{0})& = & \hat g(x,\breve{u},\breve{v},\nabla z_{1,n},\nabla
z_{2,n}) \\
& \geq & -\Delta _{q}\underline{v}+\hat\rho C_{2}d(x)^{\beta _{2}-q+1}%
\underline{v}^{q-1}=L_{q}(\underline{v}).
\end{eqnarray*}%
The same reasoning can be exploited for assuring that 
\begin{equation*}
L_{p}(w_{1,n}^{0})\leq L_{p}(\overline{u})\text{ and \ }L_{p}(w_{1,n}^{0})%
\leq L_{p}(\overline{v}).
\end{equation*}%
Accordingly, the weak comparison principle in \cite{T} implies that $%
(w_{1,n}^{0},w_{2,n}^{0})\in \lbrack \underline{u},\overline{u}]\times
\lbrack \underline{v},\overline{v}]$. Furthermore, from (\ref{14}), by the
regularity theory (see \cite[Lemma 3.1]{Hai}), it follows $%
(w_{1,n}^{0},w_{2,n}^{0})\in C_{0}^{1,\gamma }(\overline{\Omega })\times
C_{0}^{1,\gamma }(\overline{\Omega })$ for some $\gamma\in (0,1)$, and, in
particular, $\{(w_{1,n},w_{2,n})\}$ is bounded in $C_{0}^{1,\gamma }(%
\overline{\Omega })\times C_{0}^{1,\gamma }(\overline{\Omega })$. Then,
since $C_{0}^{1,\gamma }(\overline{\Omega })\subset C_{0}^{1}(\overline{%
\Omega })$ is compact, there exist a subsequence, denoted by the same
symbol, $\{(w_{1,n}^{0},w_{2,n}^{0})\}$ and $(\hat u,\hat v)$ such that 
\begin{equation}
(w_{1,n}^{0},w_{2,n}^{0})\rightarrow (\hat{u},\hat{v})\, \text{\ in }%
C_{0}^{1}(\overline{\Omega })\times C_{0}^{1}(\overline{\Omega }).
\label{32}
\end{equation}%
Hence, passing to the limit in $(P_{(\breve u,\breve v,z_{1,n},z_{2,n})})$),
one has that $(\hat u,\hat v)$ is a solution of the problem $(P_{\breve
u,\breve v,z_1,z_2})$. Namely, in view of (\ref{unique}), $(\hat{u},\hat{v}%
)=(\breve{u},\breve{v})$ and by the strong convergence (\ref{32}) we infer
that 
\begin{equation*}
\lim_{n\rightarrow \infty }(w_{1,n}^{0},w_{2,n}^{0})=(\breve{u},\breve{v})%
\text{ \ in }C_{0}^{1}(\overline{\Omega })\times C_{0}^{1}(\overline{\Omega }%
).
\end{equation*}

Now, let $(w_{1,n}^{1},w_{2,n}^{1})$ be the unique solution of the problem $%
(P_{w_{1,n}^0,w_{2,n}^0,z_{1,n},z_{2,n}})$. Following the same argument as
before we obtain%
\begin{equation*}
(w_{1,n}^{1},w_{2,n}^{1})\in \lbrack \underline{u},\overline{u}]\times
\lbrack \underline{v},\overline{v}]
\end{equation*}%
and 
\begin{equation*}
\lim_{n\rightarrow \infty }(w_{1,n}^{1},w_{2,n}^{1})=(\breve{u},\breve{v})%
\text{ \ in }C_{0}^{1}(\overline{\Omega })\times C_{0}^{1}(\overline{\Omega }%
).
\end{equation*}%
Inductively, for each $n\in 
%TCIMACRO{\U{2115} }%
%BeginExpansion
\mathbb{N}
%EndExpansion
$, we construct the sequences $\{(w_{1,n}^{k},w_{2,n}^{k})\}_{k}$ in $%
C_{0}^{1}(\overline{\Omega })\times C_{0}^{1}(\overline{\Omega })$ as a
unique solution of%
\begin{equation}
\left\{ 
\begin{array}{ll}
-\Delta _{p}u=\hat{f}(x,w_{1,n}^{k-1},w_{2,n}^{k-1},\nabla z_{1,n},\nabla
z_{2,n}) & \text{in }\Omega , \\ 
-\Delta _{q}v=\hat{g}(x,w_{1,n}^{k-1},w_{2,n}^{k-1},\nabla z_{1,n},\nabla
z_{2,n}) & \text{in }\Omega , \\ 
u,v=0 & \text{on }\partial \Omega ,%
\end{array}%
\right.  \label{26}
\end{equation}%
such that for each $k\in 
%TCIMACRO{\U{2115} }%
%BeginExpansion
\mathbb{N}
%EndExpansion
,$ we have%
\begin{equation*}
(w_{1,n}^{k},w_{2,n}^{k})\in \lbrack \underline{u},\overline{u}]\times
\lbrack \underline{v},\overline{v}]
\end{equation*}%
and%
\begin{equation*}
\lim_{n\rightarrow \infty }(w_{1,n}^{k},w_{2,n}^{k})=(\breve{u},\breve{v})%
\text{ \ in }C_{0}^{1}(\overline{\Omega })\times C_{0}^{1}(\overline{\Omega }%
).
\end{equation*}%
The task is now to show that $\{(w_{1,n}^{k},w_{2,n}^{k})\}_{n,k}$ is
relatively compact in $C_{0}^{1}(\overline\Omega )\times
C_{0}^{1}(\overline\Omega )$. Indeed, bearing in mind that $%
(w_{1,n}^{k-1},w_{2,n}^{k-1})\in \lbrack \underline{u},\overline{u}]\times
\lbrack \underline{v},\overline{v}]$ and $(z_{1,n},z_{2,n})\in \mathcal{K}%
_{1}(C)\times \mathcal{K}_{2}(C)$, on account of (\ref{14}), one gets%
\begin{equation*}
\hat f(x,w_{1,n}^k,w_{2,n}^k,z_{1,n},z_{2,n})\leq \hat C_1
d(x)^{\alpha_1},\quad \hat g(x,w_{1,n}^k,w_{2,n}^k,z_{1,n},z_{2,n})\leq \hat
C_2 d(x)^{\beta_2},
\end{equation*}%
for every $n,\, k\in\mathbb{N}$, with $\hat C_{1},\, \hat C_{2}>0$
independent from $n$ and $k$. Applying \cite[Lemma 3.1]{Hai}, $%
\{(w_{1,n}^{k},w_{2,n}^{k})\}_{n,k}$ is bounded in $C_0^{1,\gamma}(\overline%
\Omega)\times C_0^{1,\gamma}(\overline\Omega)$ and our task is achieved,
being $C_0^{1,\gamma}(\overline \Omega)$ compactly embedded in $%
C_0^1(\overline \Omega)$. Finally, the conclusion follows by proceeding
analogously to the proof of \cite[Lemma 2.5, page 535]{FMP}.
\end{proof}

\section{Proof of the main result}

\label{S4}

According to Proposition \ref{P1}, for $C>1$ large enough, for all $%
(z_{1},z_{2})\in \mathcal{K}_{1}(C)\times \mathcal{K}_{2}(C)$, there exists $%
(u^{\ast },v^{\ast })_{(z_{1},z_{2})}$ in $C_{0}^{1,\gamma }(\overline{%
\Omega })\times C_{0}^{1,\gamma }(\overline{\Omega })$ for certain $\gamma
\in (0,1)$ that is the smallest solution within $[\underline{u},\overline{u}%
]\times \lbrack \underline{v},\overline{v}]$ for system (\ref{pz}). Thus,
the operator 
\begin{equation*}
\begin{array}{lll}
\mathcal{T}: & \mathcal{K}_{1}(C)\times \mathcal{K}_{2}(C)\rightarrow & 
C_{0}^{1}(\overline{\Omega })\times C_{0}^{1}(\overline{\Omega }) \\ 
& \text{ }(z_{1},z_{2})\text{\ }\mapsto & \mathcal{T}(z_{1},z_{2})=(u^{\ast
},v^{\ast })_{(z_{1},z_{2})}%
\end{array}%
\end{equation*}%
is well defined and clearly the fixed points of the map $\mathcal{T}$ are
solutions of problem (\ref{p}).

\begin{lemma}
\label{L2} The map $\mathcal{T}$ is continuous and compact.
\end{lemma}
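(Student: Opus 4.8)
The plan is to establish compactness first and then continuity, the latter being the subtle part.

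\textbf{Compactness.} By estimates (\ref{5})--(\ref{6}), for every $(z_{1},z_{2})\in\mathcal{K}_{1}(C)\times\mathcal{K}_{2}(C)$ and every $(u,v)\in[\underline u,\overline u]\times[\underline v,\overline v]$ one has $f(x,u,v,\nabla z_{1},\nabla z_{2})\le C_{1}d(x)^{\alpha_{1}}$ and $g(x,u,v,\nabla z_{1},\nabla z_{2})\le C_{2}d(x)^{\beta_{2}}$ with $C_{1},C_{2}$ independent of $(z_{1},z_{2})$. Applying this with $(u,v)=(u^{\ast},v^{\ast})_{(z_{1},z_{2})}=\mathcal T(z_{1},z_{2})$ and invoking the regularity result \cite[Lemma 3.1]{Hai} exactly as in the proof of Theorem \ref{T4}, I obtain that $\mathcal T\big(\mathcal{K}_{1}(C)\times\mathcal{K}_{2}(C)\big)$ is bounded in $C_{0}^{1,\gamma}(\overline\Omega)\times C_{0}^{1,\gamma}(\overline\Omega)$ for some fixed $\gamma\in(0,1)$. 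Since $C_{0}^{1,\gamma}(\overline\Omega)$ is compactly embedded in $C_{0}^{1}(\overline\Omega)$ and the domain $\mathcal{K}_{1}(C)\times\mathcal{K}_{2}(C)$ is bounded in $C_{0}^{1}\times C_{0}^{1}$, the operator $\mathcal T$ is compact.

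\textbf{Continuity.} Let $(z_{1,n},z_{2,n})\to(z_{1},z_{2})$ in $\mathcal{K}_{1}(C)\times\mathcal{K}_{2}(C)$ and put $(u_{n},v_{n})=\mathcal T(z_{1,n},z_{2,n})$. By the usual subsequence principle it suffices to show that an arbitrary subsequence of $\{(u_{n},v_{n})\}$ admits a further subsequence converging to $\mathcal T(z_{1},z_{2})$ in $C_{0}^{1}\times C_{0}^{1}$. Using the compactness just proved, along a subsequence (not relabeled) $(u_{n},v_{n})\to(\tilde u,\tilde v)$ in $C_{0}^{1}(\overline\Omega)\times C_{0}^{1}(\overline\Omega)$; since every $(u_{n},v_{n})$ lies in $[\underline u,\overline u]\times[\underline v,\overline v]$ with $u_{n}\ge\underline u>0$ and $v_{n}\ge\underline v>0$ in $\Omega$, the same holds for $(\tilde u,\tilde v)$. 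I would then pass to the limit in the weak formulation of $(P_{(z_{1,n},z_{2,n})})$: the principal parts converge because $\eta\mapsto|\eta|^{p-2}\eta$ and $\eta\mapsto|\eta|^{q-2}\eta$ are continuous and $(\nabla u_{n},\nabla v_{n})\to(\nabla\tilde u,\nabla\tilde v)$ uniformly; on the right-hand sides, $f(x,u_{n},v_{n},\nabla z_{1,n},\nabla z_{2,n})\to f(x,\tilde u,\tilde v,\nabla z_{1},\nabla z_{2})$ a.e.\ in $\Omega$ by the Carath\'eodory property together with the uniform convergence of $\nabla z_{i,n}$ and the pointwise convergence of $(u_{n},v_{n})$, while (\ref{5}) provides the domination $|f(x,u_{n},v_{n},\nabla z_{1,n},\nabla z_{2,n})\varphi|\le C_{1}d(x)^{\alpha_{1}}|\varphi|\in L^{1}(\Omega)$ for $\varphi\in W_{0}^{1,p}(\Omega)$ by the Hardy--Sobolev inequality, so dominated convergence applies (and analogously for $g$ via (\ref{6})). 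Hence $(\tilde u,\tilde v)\in S_{(z_{1},z_{2})}$, and minimality of $(u^{\ast},v^{\ast})_{(z_{1},z_{2})}=\mathcal T(z_{1},z_{2})$ in $S_{(z_{1},z_{2})}$ yields $\mathcal T(z_{1},z_{2})\le(\tilde u,\tilde v)$.

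For the reverse inequality I would apply Lemma \ref{L4} with $(\breve u,\breve v)=\mathcal T(z_{1},z_{2})\in S_{(z_{1},z_{2})}$: there exist $(\breve u_{n},\breve v_{n})\in S_{(z_{1,n},z_{2,n})}$ with $(\breve u_{n},\breve v_{n})\to\mathcal T(z_{1},z_{2})$ in $C_{0}^{1}\times C_{0}^{1}$. Since $(u_{n},v_{n})$ is the smallest element of $S_{(z_{1,n},z_{2,n})}$, we have $(u_{n},v_{n})\le(\breve u_{n},\breve v_{n})$, and letting $n\to\infty$ along the chosen subsequence gives $(\tilde u,\tilde v)\le\mathcal T(z_{1},z_{2})$. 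Combined with the previous estimate, $(\tilde u,\tilde v)=\mathcal T(z_{1},z_{2})$; as the limit does not depend on the subsequence, the whole sequence $\{(u_{n},v_{n})\}$ converges to $\mathcal T(z_{1},z_{2})$ in $C_{0}^{1}\times C_{0}^{1}$, so $\mathcal T$ is continuous. The main difficulty is concentrated in this last step: a priori the limit $(\tilde u,\tilde v)$ of the minimal solutions could be strictly larger than the minimal solution of the limit problem, and it is precisely Lemma \ref{L4} --- the approximation of a prescribed solution of $(P_{(z_{1},z_{2})})$ by solutions of the perturbed problems --- that excludes this.
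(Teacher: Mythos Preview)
Your proof is correct and follows essentially the same approach as the paper: compactness via the uniform estimates (\ref{5})--(\ref{6}) together with \cite[Lemma 3.1]{Hai} and the compact embedding $C_{0}^{1,\gamma}\hookrightarrow C_{0}^{1}$, and continuity by extracting a $C_{0}^{1}$-convergent subsequence, passing to the limit to land in $S_{(z_{1},z_{2})}$, and then invoking Lemma~\ref{L4} to rule out that the limit is strictly larger than the minimal solution. Your presentation is in fact slightly more explicit than the paper's (you spell out the subsequence principle and the dominated-convergence argument), and you apply Lemma~\ref{L4} only to the single solution $\mathcal T(z_{1},z_{2})$ rather than to an arbitrary $(w_{1},w_{2})\in S_{(z_{1},z_{2})}$, but this is a cosmetic difference.
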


\begin{proof}
First, observe that $\mathcal{T}$ is compact, namely, taking in mind that $%
\mathcal{K}_{1}(C)\times \mathcal{K}_{2}(C)$ is bounded with respect to the $%
(C_0^1(\overline\Omega)\times C_0^1(\overline\Omega))$-topology, for any
sequence $\{(z_{1,n},z_{2,n})\}_{n}$ in $\mathcal{K}_{1}(C)\times \mathcal{K}%
_{2}(C)$ one has that $(u_{n}^{\ast },v_{n}^{\ast })=\mathcal{T}%
(z_{1,n},z_{2,n})$ is relatively compact in $C_{0}^{1}(\overline{\Omega }%
)\times C_{0}^{1}(\overline{\Omega }) $. This follows readily from (\ref{5})
and (\ref{6}), since $(\underline{u},\underline{v})\leq (u_{n}^{\ast
},v_{n}^{\ast })\leq (\overline{u},\overline{v})$ in $\Omega$, Indeed, as in
the proof of Lemma \ref{L4}, applying \cite[Lemma 3.1]{Hai} one has that $%
\{(u_n,v_n)\}$ is bounded in $C_0^{1,\gamma}(\overline\Omega)\times
C_0^{1,\gamma}(\overline\Omega)$ and we can conclude again invoking the
compactness of the embedding $C_0^{1,\gamma}(\overline
\Omega)\hookrightarrow C_0^{1}(\overline \Omega)$.

Let us show that $\mathcal{T}$ is continuous. Let $(z_{1,n},z_{2,n})%
\rightarrow (z_{1},z_{2})$ in $\mathcal{K}_{1}(C)\times \mathcal{K}_{2}(C)$
and put $(u_{n}^*,v_{n}^*)=\mathcal{T}(z_{1,n},z_{2,n})$. Then, we already
know that there exist a subsequence $\{(u_{n_{k}},v_{n_{k}})\}_{k}$ and an
element $(u_n^*,v_n^*)\in[\underline u,\overline u]\times[\underline
v,\overline v]$ such that 
\begin{equation}  \label{subsequence}
(u_{n_{k}},v_{n_{k}})\rightarrow (u,v)\text{ in }C_{0}^{1}(\overline{\Omega }%
)\times C_{0}^{1}(\overline{\Omega })\text{.}
\end{equation}
Passing to the limit in the equations 
\begin{equation*}
-\Delta _{p}u_{n_k}^*=f(x,u_{n_k}^*,v_{n_k}^*,\nabla z_{1,{n_k}},\nabla z_{2,%
{n_k}}),
\end{equation*}
\begin{equation*}
-\Delta _{q}v_{n_k}^*=g(x,u_{n_k}^*,v_{n_k}^*,\nabla z_{1,{n_k}},\nabla z_{2,%
{n_k}})
\end{equation*}
one gets that $(u^*,v^*)\in S_{(z_1,z_2)}$.

The proof is completed by showing that $(u^{\ast },v^{\ast })$ is the
smallest solution of (\ref{pz}) within $[\underline{u},\overline{u}]\times
\lbrack \underline{v},\overline{v}]$. Indeed, fix a solution $(w_{1},w_{2})$
of (\ref{pz}) such that $(w_{1},w_{2})\in \lbrack \underline{u},\overline{u}%
]\times \lbrack \underline{v},\overline{v}]$. We can conclude verifying that 
\begin{equation}
u^{\ast }\leq w_{1},\quad v^{\ast }\leq w_{2}.  \label{u<w}
\end{equation}%
According to Lemma \ref{L4}, there exists $(w_{1,n},w_{2,n})\in
S_{(z_{1,n},z_{2,n})}$ such that 
\begin{equation}
(w_{1,n},w_{2,n})\rightarrow (w_{1},w_{2})\text{ in }C_{0}^{1}(\overline{%
\Omega })\times C_{0}^{1}(\overline{\Omega })\text{ \ as }n\rightarrow
\infty .  \label{w1n,w2n}
\end{equation}%
Then, since $(u_{n_{k}}^{\ast },v_{n_{k}}^{\ast })$ is the smallest solution
in $[\underline{u},\overline{u}]\times \lbrack \underline{v},\overline{v}]$
of $(P_{(z_{1,n_{k}},z_{2,n_{k}})})$, it is clear that 
\begin{equation*}
u_{n_{k}}^{\ast }\leq w_{1,n_{k}},\quad v_{n_{k}}^{\ast }\leq w_{2,n_{k}},
\end{equation*}%
for all $k\in \mathbb{N}$. Passing to the limit in the previous inequalities
and bearing in mind (\ref{subsequence}) and (\ref{w1n,w2n}) one directly
achieves (\ref{u<w}). This ends the proof.
\end{proof}

\begin{lemma}
\label{L3} $\mathcal{T}(\mathcal{K}_{1}(C)\times \mathcal{K}_{2}(C))\subset 
\mathcal{K}_{1}(C)\times \mathcal{K}_{2}(C)$ provided $C>1$ is large enough.
\end{lemma}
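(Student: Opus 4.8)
The plan is to check that, for $(z_1,z_2)\in\mathcal{K}_1(C)\times\mathcal{K}_2(C)$, the pair $(u^*,v^*)=\mathcal{T}(z_1,z_2)=(u^*,v^*)_{(z_1,z_2)}$ fulfils the three conditions defining membership in $\mathcal{K}_1(C)\times\mathcal{K}_2(C)$. The order relations $\underline{u}\le u^*\le\overline{u}$ and $\underline{v}\le v^*\le\overline{v}$ come for free from Proposition~\ref{P1}, which locates the smallest solution in $[\underline{u},\overline{u}]\times[\underline{v},\overline{v}]$. Hence everything reduces to proving the two gradient bounds $\|\nabla u^*\|_\infty\le CR_1$ and $\|\nabla v^*\|_\infty\le CR_2$, and for these I would combine a sharpened form of the estimates (\ref{5})--(\ref{6}) with the a priori bound (\ref{12}) borrowed from \cite[Lemma~1]{BE}.

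First I would re-run the computation of (\ref{5}), but \emph{without} discarding the boundary decay of $\overline{v}$ (which was dropped there only to fit the hypothesis of Theorem~\ref{T4}). Using $u^*\ge\underline{u}=C^{-1}\phi_{1,p}\ge C^{-1}l\,d(x)$ and $v^*\le\overline{v}=C\xi_2\le Cc_1'd(x)$ from (\ref{9}), (\ref{11}), (\ref{21}), together with the sign conditions $\alpha_1<0<\beta_1$ and $|\nabla z_i|\le CR_i$, one obtains, writing $h_1:=f(x,u^*,v^*,\nabla z_1,\nabla z_2)$,
\begin{equation*}
h_1(x)\le M_1\,l^{\alpha_1}(c_1')^{\beta_1}\,C^{-\alpha_1+\beta_1}\,d(x)^{\alpha_1+\beta_1}+(CR_1)^{\gamma_1}+(CR_2)^{\theta_1}\qquad\text{a.e. in }\Omega .
\end{equation*}
The crucial observation is that $\alpha_1+\beta_1\ge 0$ by $\mathrm{H}(f)$, so $d(x)^{\alpha_1+\beta_1}$ is bounded on $\overline{\Omega}$; therefore $h_1\in L^\infty(\Omega)$ and, since $C>1$,
\begin{equation*}
\|h_1\|_\infty\le A_1\,C^{\sigma_1},\qquad \sigma_1:=\max\{-\alpha_1+\beta_1,\ \gamma_1,\ \theta_1\},
\end{equation*}
with $A_1>0$ independent of $C$ and, again by $\mathrm{H}(f)$, $\sigma_1<p-1$. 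The symmetric computation starting from (\ref{6}) gives $h_2:=g(x,u^*,v^*,\nabla z_1,\nabla z_2)\in L^\infty(\Omega)$ with $\|h_2\|_\infty\le A_2\,C^{\sigma_2}$, $A_2$ independent of $C$ and $\sigma_2:=\max\{\alpha_2-\beta_2,\ \gamma_2,\ \theta_2\}<q-1$ by $\mathrm{H}(g)$.

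Now $u^*\in W_0^{1,p}(\Omega)$ is the weak solution of $-\Delta_p u^*=h_1$ with $h_1\in L^\infty(\Omega)$, and likewise $v^*$ solves $-\Delta_q v^*=h_2$ with $h_2\in L^\infty(\Omega)$; so (\ref{12}) applies and yields
\begin{equation*}
\|\nabla u^*\|_\infty\le K_p\|h_1\|_\infty^{1/(p-1)}\le K_p A_1^{1/(p-1)}\,C^{\sigma_1/(p-1)},\qquad \|\nabla v^*\|_\infty\le K_q A_2^{1/(q-1)}\,C^{\sigma_2/(q-1)}.
\end{equation*}
Since $\sigma_1/(p-1)<1$ and $\sigma_2/(q-1)<1$ while $R_1,R_2>0$ are fixed, both right-hand sides are $o(C)$ as $C\to\infty$; hence there is $C_0>1$ such that for every $C\ge C_0$ one has $\|\nabla u^*\|_\infty\le CR_1$ and $\|\nabla v^*\|_\infty\le CR_2$. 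Enlarging $C_0$ if necessary so that the conclusions of Proposition~\ref{P1} remain valid, we obtain $\mathcal{T}(z_1,z_2)\in\mathcal{K}_1(C)\times\mathcal{K}_2(C)$ for all such $C$, which is the assertion.

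I expect the only genuine difficulty to be the bookkeeping of the $C$-dependence: one must check that the constants $A_1,A_2$ produced in the sharpened (\ref{5})--(\ref{6}) really do not depend on $C$, and that \emph{every} exponent of $C$ that occurs there --- namely $-\alpha_1+\beta_1,\gamma_1,\theta_1$ and $\alpha_2-\beta_2,\gamma_2,\theta_2$ --- is strictly smaller than $p-1$, respectively $q-1$, which is precisely the content of the chains of inequalities in $\mathrm{H}(f)$ and $\mathrm{H}(g)$. Once this is granted, the gradient estimate grows strictly more slowly than the linear-in-$C$ radius of $\mathcal{K}_1(C)$ and $\mathcal{K}_2(C)$, and the invariance of the rectangle under $\mathcal{T}$ follows at once.
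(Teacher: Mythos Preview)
Your proof is correct and follows essentially the same approach as the paper: you bound the $L^\infty$ norm of the right-hand sides by a constant times $C^{\sigma_i}$ with $\sigma_1<p-1$ and $\sigma_2<q-1$, then invoke the gradient estimate (\ref{12}) to conclude that $\|\nabla u^*\|_\infty$ and $\|\nabla v^*\|_\infty$ grow sublinearly in $C$, hence fall below $CR_1$ and $CR_2$ for $C$ large. The paper's presentation is only cosmetically different, writing the final inequality directly as $\|h_1\|_\infty\le(K_p^{-1}CR_1)^{p-1}$ rather than isolating the exponent $\sigma_1$ as you do.
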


\begin{proof}
For any $(z_{1},z_{2})\in \mathcal{K}_{1}(C)\times \mathcal{K}_{2}(C)$, let $%
(u^{\ast },v^{\ast })_{(z_{1},z_{2})}=\mathcal{T}(z_{1},z_{2})$ be the
smallest solution of (\ref{pz}) in $[\underline{u},\overline{u}]\times
\lbrack \underline{v},\overline{v}]$. Then, according to $\mathrm{H}(f)$, $%
\mathrm{H}(g)$, (\ref{11}), (\ref{21}) and (\ref{9}), we get%
\begin{eqnarray*}
f(x,u^{\ast },v^{\ast },\nabla z_{1},\nabla z_{2})& \leq & M_{1}(u^{\ast
})^{\alpha _{1}}(v^{\ast })^{\beta _{1}}+|\nabla z_{1}|^{\gamma
_{1}}+|\nabla z_{2}|^{\theta _{1}} \\
& \leq & M_{1}\underline{u}^{\alpha _{1}}\overline{v}^{\beta
_{1}}+(CR_{1})^{\gamma _{1}}+(CR_{2})^{\theta _{1}} \\
& \leq & M_{1}C^{-\alpha _{1}+\beta _{1}}(ld(x))^{\alpha _{1}}(c_{1}^{\prime
}d(x))^{\beta _{1}}+(CR_{1})^{\gamma _{1}}+(CR_{2})^{\theta _{1}} \\
& \leq & M_{1}C^{-\alpha _{1}+\beta _{1}}l^{\alpha _{1}}(c_{1}^{\prime
})^{\beta _{1}}\left\Vert d(x)\right\Vert _{\infty }^{\alpha _{1}+\beta
_{1}}+(CR_{1})^{\gamma _{1}}+(CR_{2})^{\theta _{1}} \\
& \leq & (K_{p}^{-1}CR_{1})^{p-1}
\end{eqnarray*}%
a.e. in $\Omega$ and, analogously, 
\begin{eqnarray*}
g(x,u^{\ast },v^{\ast },\nabla z_{1},\nabla z_{2})& \leq & M_{2}(u^{\ast
})^{\alpha _{2}}(v^{\ast })^{\beta _{2}}+|\nabla z_{1}|^{\gamma
_{2}}+|\nabla z_{2}|^{\theta _{2}} \\
& \leq & M_{2}\overline{u}^{\alpha _{2}}\underline{v}^{\beta
_{2}}+(CR_{1})^{\gamma _{2}}+(CR_{2})^{\theta _{2}} \\
& \leq & M_{2}C^{\alpha _{2}-\beta _{2}}\left\Vert \xi _{1}\right\Vert
_{\infty }^{\alpha _{2}}\phi _{1,q}^{\beta _{2}}+(CR_{1})^{\gamma
_{2}}+(CR_{2})^{\theta _{2}} \\
& \leq & M_{2}C^{\alpha _{2}-\beta _{2}}c_{1}^{\alpha _{2}}l^{\beta
_{2}}\left\Vert d(x)\right\Vert _{\infty }^{\alpha _{2}+\beta
_{2}}+(CR_{1})^{\gamma _{2}}+(CR_{2})^{\theta _{2}} \\
& \leq & (K_{q}^{-1}CR_{2})^{q-1}
\end{eqnarray*}
a.e. in $\Omega$, provided that $C>1$ is sufficiently large. Then, using the
inequality in (\ref{12}), it follows that%
\begin{equation*}
\begin{array}{c}
\left\Vert \nabla u^{\ast }\right\Vert _{\infty }\leq CR_{1}\text{ \ and \ }%
\left\Vert \nabla v^{\ast }\right\Vert _{\infty }\leq CR_{2},%
\end{array}%
\end{equation*}%
namely $(u^{\ast },v^{\ast })_{(z_{1},z_{2})}\in \mathcal{K}_{1}(C)\times 
\mathcal{K}_{2}(C)$. This ends the proof of lemma.
\end{proof}

Now we are in position to prove our main result.

\begin{proof}[Proof of Theorem \protect\ref{T1}]
On the basis of Lemmas \ref{L2} and \ref{L3}, Schauder's fixed point theorem
(see, e.g., \cite[p. 57]{Z}) garantees the existence of $(u,v)\in \mathcal{K}%
(C)_{1}\times \mathcal{K}_{2}(C)$ satisfying $(u,v)=\mathcal{T}(u,v).$
Taking into account the definition of $\mathcal{T}$, it turns out that $%
(u,v)\in C_{0}^{1}(\overline{\Omega })\times C_{0}^{1}(\overline{\Omega })$
is a (positive) solution of problem (\ref{p}). Since $(u,v)\in \mathcal{K}%
_{1}(C)\times \mathcal{K}_{2}(C)$, in particular, $(u,v)\in [\underline
u,\overline u]\times[\underline v,\overline v]$ and on account of (\ref{11}%
), (\ref{21}) and (\ref{9}), the property (\ref{c}) is fullfield. This
completes the proof.
\end{proof}

\begin{acknowledgement}
This work was partially performed when the third-named author visited Reggio
Calabria University, to which he is grateful for the kind hospitality.
\end{acknowledgement}

\end{document}